\newtheorem{theorem}{Theorem}[section]
\newtheorem{lemma}[theorem]{Lemma}
\newtheorem{theoremA}{Theorem}
\theoremstyle{definition}
\newtheorem{definition}[theorem]{Definition}
\newtheorem{remark}[theorem]{Remark}
\newtheorem{example}[theorem]{Example}
\numberwithin{equation}{section}
\def\UU{{\mathcal U}}
\def\LL{{\mathscr L}}
\begin{document}


	\title{A note on $ p $-supersolubility of finite groups}
	
	\author{Zhengtian Qiu, Guiyun Chen and Jianjun Liu\footnote{Corresponding author.}\vspace{0.5em}\\
		{\small School of Mathematics and Statistics, Southwest University,}\\
		{\small  Chongqing 400715, People's Republic of China
		}\vspace{0.5em}\\
		{\small  E-mail addresses: qztqzt506@163.com, gychen1963@163.com, liujj198123@163.com}}
	
	\date{}
	\maketitle
	
	\begin{abstract}
	Let $ H $ be a subgroup of a finite group $ G $. We say that $ H $ satisfies  $ \mathscr L $-$ \Pi $-property in $ G $ if $ | G / K : N _{G / K} (HK/K)| $ is a $ \pi (HK/K) $-number for all maximal $ G $-invariant subgroup $ K $ of $ H^{G} $.  In this paper, we give a characterization of finite $p$-supersoluble groups under assumption that some subgroups of prime power order satisfy $ \mathscr L $-$ \Pi $-property.
	\end{abstract}

	\renewcommand{\thefootnote}{\fnsymbol{footnote}}
	
	\footnotetext[0]{Keywords: finite group, $ p $-supersoluble group, $ \LL $-$ \Pi $-property.}
	
	\footnotetext[0]{2020 Mathematics Subject Classification: 20D10, 20D20.}
	
	\section{Introduction}

    All groups considered in this paper are finite.
    We use conventional notions as in \cite{MR1169099} or \cite{Gorenstein-1980}.

    Throughout the paper,  $ G $ is always a finite group and $ p $ is always a prime number. Let $ \pi $ denote some set of primes and $ \pi(G) $ denote the set of all prime divisors of $ |G| $. A chief factor factor $ L/K $ of $ G $ is said to be a \emph{$ p $-$ G $-chief factor}  if the prime $ p $ divides $ |L/K| $. We write ``$ H\lesssim G $" to mean that $ H $ is  isomorphic to a subgroup of $ G $.    A $ 2 $-group is called \emph{quaternion-free}
    if it has no section isomorphic to the quaternion group of order $ 8 $.

    Recall that the \emph{generalized Fitting subgroup}  $ F^{*}(G) $ of $ G $ is the unique maximal normal quasinilpotent subgroup of $ G $ (see \cite[Chapter X]{MR662826}), and the \emph{generalized $ p $-Fitting subgroup}  $ F_{p}^{*}(G) $ of $G$ is the normal subgroup of $ G $ such that $ F_{p}^{*}(G)/O_{p'}(G) = F^{*}(G/O_{p'}(G)) $ (see \cite{ball-2009}). The \emph{supersoluble hypercenter}  $ Z_{\UU}(G) $ of  $ G $ is the product of all normal subgroups $ H $ of $ G $, such that all $ G $-chief factors below $ H $ have prime order, and the \emph{$ p $-supersoluble hypercenter}  $ Z_{\UU_{p}}(G) $ of $G$ is the product of all normal subgroups $ H $ of $ G $, such that all $ p $-$ G $-chief factors below $ H $ have order $ p $ for some fixed prime $ p $.















    It is interesting to investigate the influence of some kind of embedding properties of subgroups on the structure of a finite group and many significant results have been given (for example, see \cite{Wang, Skiba-2007, Su-2014, Qian-2021}). In 2011, Li \cite{li-2011} introduced  the defnitions of the \emph{$ \Pi $-property} and \emph{$ \Pi $-normality} of subgroups of finite groups, which generalize a large number of other known embedding properties (see \cite[Section 2]{li-2011}). Let $ H $ be a subgroup of a group  $ G $.  $ H $ is said to  satisfy  $ \Pi  $-property in $ G $ if for any   chief factor $ L / K $ of $ G $, $ | G / K : N _{G / K} ( HK/K\cap L/K )| $ is a $ \pi (HK/K\cap L/K) $-number; and we call that $ H $ is $ \Pi $-normal in $ G $ if there exist a subnormal subgroup $ T $ of $ G $  such that $ G = HT $ and $ H \cap T \leq I \leq H $, where  $ I $  satisfies $ \Pi $-property in $ G $.


    In 2014, Su, Li and Wang \cite{Su-2014} gived the following criterion of a normal subgroup $ E $ of $ G $ being contained in $ Z_{\UU_{p}}(G) $ in terms of $ \Pi $-normality of some $ p $-subgroups of $ E $, which extended many  results.

    \begin{theorem}[{\cite[Main Theorem]{Su-2014}}]
    	Let $ p $ be a prime, $ E $ a normal subgroup of  $ G $ such that $ p $ divides the order of $ E $, and $ X $ a normal subgroup of $ G $ satisfying $ F_{p}^{*}(E) \leq X \leq E $. Then $ E\leq Z_{\UU_{p}}(G) $ if and only if a Sylow $ p $-subgroup $ P $ of $ X $ has a subgroup $ D $ such that $ 1 < | D | \leq \mathrm{max}(p, |P|/p ) $ and all subgroups $ H $ of $ P $ of order $ | H | = | D | $ and
    	all cyclic subgroups of $ P $ of order $ 4 $ (if $ P $ is a non-abelian $ 2 $-group and $ | D | = 2 $) are $ \Pi $-normal in $ G $.
    \end{theorem}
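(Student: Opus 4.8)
The plan is to prove both implications, disposing of the forward (necessity) direction quickly and concentrating on the converse. For necessity, suppose $E \leq Z_{\UU_{p}}(G)$, so that $X \leq Z_{\UU_{p}}(G)$ and every $p$-$G$-chief factor below $X$ has order $p$. Given a subgroup $H$ of a Sylow $p$-subgroup $P$ of $X$, I would show that $H$ in fact satisfies $\Pi$-property in $G$, which is the special case of $\Pi$-normality obtained by taking $T = G$ and $I = H$. For any chief factor $L/K$ of $G$ the section $(HK \cap L)/K$ is a $p$-group; if it is nontrivial then, since $X$ is $G$-normal, $L \leq XK$, so $L/K$ is a $p$-$G$-chief factor of $X$ and hence has order $p$, forcing $(HK\cap L)/K$ to equal the normal section $L/K$ and its normalizer index to be $1$. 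Thus $|G/K : N_{G/K}((HK\cap L)/K)|$ is always a $p$-number, which disposes of every required subgroup, the cyclic subgroups of order $4$ included, uniformly.

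For sufficiency I would argue by induction on $|G|$. The first reduction is to the prime $p$: since $p$-$G$-chief factors are insensitive to $p'$-sections, if $O_{p'}(E) \neq 1$ I would choose a minimal normal subgroup $N$ of $G$ inside $O_{p'}(E) \leq F_{p}^{*}(E) \leq X$, verify that the hypotheses descend to $G/N$ (using that $\Pi$-normality passes to quotients, that $F_{p}^{*}(E/N) = F_{p}^{*}(E)/N$, and that $PN/N$ is a Sylow $p$-subgroup of $X/N$ carrying the same family of subgroups of order $|D|$ with $|PN/N| = |P|$), conclude $E/N \leq Z_{\UU_{p}}(G/N)$ by induction, and lift back. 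So I may assume $O_{p'}(E) = 1$, whence $F_{p}^{*}(E) = F^{*}(E)$ and $C_{E}(F^{*}(E)) \leq F^{*}(E)$.

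The second, decisive, reduction is to replace $E$ by $F_{p}^{*}(E)$: I would invoke the standard reduction principle that a normal subgroup lies in $Z_{\UU_{p}}(G)$ as soon as its generalized $p$-Fitting subgroup does (the centralizer containment $C_{E}(F^{*}(E)) \leq F^{*}(E)$ forcing the action on the remaining chief factors to be $p$-supersoluble), so it suffices to prove $F_{p}^{*}(E) = F^{*}(E) = F(E)E(E) \leq Z_{\UU_{p}}(G)$. The heart of the matter is then to use the $\Pi$-normality of the subgroups of order $|D|$ to show, first, that the layer $E(E)$ is trivial: a component contributes a nonabelian simple $p$-chief factor $S$, and choosing a subgroup of order $|D|$ meeting a Sylow $p$-subgroup of that component in a nontrivial $p$-group yields a $p$-subgroup of $S$ whose normalizer in $S$ has index divisible by a prime other than $p$, contradicting the $\Pi$-property. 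Second, I would show that each $G$-chief factor inside $O_{p}(E)$ has order $p$: taking a minimal normal $N \leq O_{p}(E)$ of $G$ and intersecting $N$ with the prescribed subgroups of $P$ reduces, in the extreme cases $|D| = p$ and $|D| = |P|/p$, to the classical minimal-subgroup and maximal-subgroup analysis, while the intermediate orders are reached by an auxiliary induction that cuts $|D|$ down after passing to $G/N$.

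The step I expect to be the main obstacle is controlling the exceptional behaviour at $p = 2$. When $P$ is a non-abelian $2$-group and $|D| = 2$, subgroups of order $2$ alone do not suffice — this is the familiar quaternion obstruction — and it is precisely here that the hypothesis on the cyclic subgroups of order $4$ must be invoked, together with a quaternion-free argument, to force the relevant $2$-chief factors down to order $2$. Managing this case in tandem with the elimination of the layer, while keeping every inductive hypothesis intact under the various quotients and the shrinking of $|D|$, is the delicate part; the remaining orders of $D$ and the odd-$p$ case should then follow from the now-standard hypercenter machinery.
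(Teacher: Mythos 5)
A preliminary caveat: this statement is not proved in the paper at all. It appears as Theorem~1.1, imported verbatim from \cite{Su-2014} as motivation, and the paper's own work (Section~3, culminating in Theorem~\ref{main}) proves a different criterion in which $\Pi$-normality is replaced by the weaker $\LL$-$\Pi$-property. So there is no in-paper proof to compare you against; what follows assesses your argument on its own merits. Your necessity half is essentially correct: for $H\leq P$ and a chief factor $L/K$ of $G$ with $HK\cap L>K$, normality of $X$ gives $L\leq XK$, so by the Dedekind law $L=(L\cap X)K$ and $L/K$ is $G$-isomorphic to the chief factor $(L\cap X)/(K\cap X)$ lying below $X\leq E\leq Z_{\UU_{p}}(G)$; hence $|L/K|=p$, the section $(HK\cap L)/K$ equals $L/K$ and is normal in $G/K$, so $H$ satisfies the $\Pi$-property and is $\Pi$-normal with $T=G$, $I=H$. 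Your appeal to the reduction principle ``$F_{p}^{*}(E)\leq Z_{\UU_{p}}(G)$ implies $E\leq Z_{\UU_{p}}(G)$'' is also legitimate: that is exactly \cite[Lemma 2.13]{Su-2014}, the same lemma this paper invokes in Step~2 of the proof of Theorem~\ref{super}.

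The sufficiency half, however, has two genuine gaps. First, you repeatedly argue from the $\Pi$-property, but the hypothesis is only $\Pi$-normality: a subgroup $H$ of order $|D|$ comes equipped with a subnormal supplement $T$ (so $G=HT$) and a subgroup $I$ with $H\cap T\leq I\leq H$, and only $I$ is guaranteed to satisfy the $\Pi$-property. When $T$ is a proper supplement, $I$ may well be trivial, and then your layer-killing step (``\ldots contradicting the $\Pi$-property'') extracts no contradiction at all; every such step must first dispose of the proper-supplement case by a separate subnormality/order argument, and in the literature this case analysis is a substantial part of the proof, not a formality. Second, the real core of the theorem---that a \emph{single fixed} order $|D|$, located anywhere in the range $p\leq |D|\leq |P|/p$, controls all $p$-$G$-chief factors below $E$---is only gestured at. Your ``auxiliary induction that cuts $|D|$ down after passing to $G/N$'' must confront minimal normal subgroups $N$ of order equal to $|D|$, equal to $|D|/p$, smaller, or larger; must regenerate the order-$4$ hypothesis in quotients when $p=2$ and the Sylow $2$-subgroup of the quotient is not quaternion-free; and must avoid the degenerate situations where $|D|/|N|\leq 1$ or where the constraint on $|D|$ fails in the quotient. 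This bookkeeping is precisely what the analogous Lemmas~\ref{Also}, \ref{invariant}, \ref{little} and~\ref{G/N} of the present paper exist to do for the $\LL$-$\Pi$-property, and you carry out none of it---indeed you flag it yourself as ``the delicate part.'' As written, the proposal is a sensible road map whose hard middle is missing, so it cannot be accepted as a proof.
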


    From the definition of $ \Pi $-property of subgroups, all chief factors of $ G $  have been considered.  Now we introduce a new concept by considering fewer chief factors of $ G $, which generalizes the concept of $ \Pi $-property of subgroups.

    \begin{definition}
    	Let $ H $ be  a subgroup of $ G $, we say that $ H $ satisfies \emph{$ \LL $-$ \Pi $-property}  in $ G $ if $ | G / K : N _{G / K} (HK/K)| $ is a $ \pi (HK/K) $-number for any  $ G $-chief factor  of type $ H^{G}/K $, which means  $ K $ is a maximal $ G $-invariant subgroup  of $ H^{G} $.
    \end{definition}

    Let $ H $ be a subgroup of $ G $. If $ H^{G} $ has no maximal $ G $-invariant subgroups, we still say that $ H $ satisfies  $ \LL $-$ \Pi $-property in $ G $.

    It is easy to see that if a subgroup $ H $ of $ G $ satisfies  $ \Pi $-property in $ G $, then $ H $ satisfies $ \LL $-$ \Pi $-property in $ G $.  But the converse fails in general. Let us see the following  example.

    \begin{example}
    	Let $ G = S_{4} $ be the symmetric group of degree $ 4 $. It is easy to see that all subgroups  of $ G $ of order $ 4 $ satisfy $ \LL $-$ \Pi $-property in $ G $.  Observe that $ \langle (1234)\rangle $ does not satisfy  $ \Pi $-property in $ G $.
    \end{example}

    In this paper, we  obtain a criterion for the $ p $-supersolubility of a finite group  by  assuming that some $ p $-subgroups of fixed order satisfy $ \LL $-$ \Pi $-property. Our main result is  as follows.

    \begin{theoremA}\label{main}
    	Let $ P $ be a Sylow $ p $-subgroup of $ G $, and let $ d $ be a power of $ p $ such that $ 1 < d < |P| $. Assume that $ P $ satisfies the following:
    	
    	\begin{enumerate}[\rm(1)]
    		\item Every  subgroup of $ P $ of order $ d $ satisfies $ \LL $-$ \Pi $-property in $ G $;
    		
    		\item If $ d=p=2 $ and $ P $ is not quaternion-free, we  further suppose that every cyclic subgroup of $ P $ of order $ 4 $ satisfies $ \LL $-$ \Pi $-property in $ G $.
    		
        \end{enumerate}	
    	
    	\noindent Then $ G $ is $ p $-supersoluble  whenever $ d = p $, or $ d\leq  |P\cap O_{p'p}(G)|/p $, or $ d\leq \sqrt{|P|} $.
    	
    \end{theoremA}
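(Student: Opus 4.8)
The plan is to argue by a minimal counterexample: let $G$ be a group of least order satisfying the hypotheses but failing to be $p$-supersoluble. The first block of work is to make the hypotheses descend to quotients and to clear out the $p'$-core. Using the inheritance properties of the $\LL$-$\Pi$-property (that it passes to $HN/N$ in $G/N$ for $N\trianglelefteq G$), I would first treat $O_{p'}(G)$: if $O_{p'}(G)\neq 1$, pass to $\bar G=G/O_{p'}(G)$, whose Sylow $p$-subgroup $\bar P\cong P$ has the same order, observe that the order-$d$ subgroups (and the cyclic subgroups of order $4$ in the exceptional clause) push down to subgroups of $\bar P$ retaining $\LL$-$\Pi$-property in $\bar G$, and note the numerical constraints on $d$ survive since $|P\cap O_{p'p}(G)|$ becomes $|O_p(\bar G)|$. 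Because $p$-supersolubility is detected modulo $O_{p'}(G)$, minimality forces $\bar G$, hence $G$, to be $p$-supersoluble, a contradiction. Thus I may assume $O_{p'}(G)=1$, so that $F_p^*(G)=F^*(G)$ and $C_G(F^*(G))\le F^*(G)$.

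The second block locates the offending chief factor. With $O_{p'}(G)=1$ I would aim to show $F^*(G)=O_p(G)$, i.e.\ to exclude non-abelian components; feeding the $\LL$-$\Pi$-property of order-$d$ subgroups into the layer of $F^*(G)$ should force the components to have order coprime to $p$ and then to vanish, the exceptional clause on cyclic subgroups of order $4$ being exactly what controls the $p=2$, non-quaternion-free situation. Granting this, the failure of $p$-supersolubility produces a $p$-$G$-chief factor of order greater than $p$ inside $O_p(G)$; after passing to the appropriate quotient I may take it to be a minimal normal subgroup $N$ of $G$ with $|N|=p^m$, $m\ge 2$, while minimality of $G$ gives that $G/N$ is already $p$-supersoluble. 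Everything then reduces to deriving a contradiction from the existence of such an $N$.

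The engine of the argument is a counting step made possible by the very weak form of the hypothesis. Since $N$ is minimal normal, it is an elementary abelian, $G$-irreducible $\mathbb F_p G$-module, so for \emph{every} nontrivial $H\le N$ one has $H^G=N$; hence $N$ has a unique maximal $G$-invariant subgroup, namely $1$, and the $\LL$-$\Pi$-property of $H$ collapses to the single demand that $|G:N_G(H)|$ be a power of $p$. Choosing $H$ of order $d$ inside $N$ (possible once $d\le|N|$) and writing $k=\log_p d$, this says that every $G$-orbit on the set of $k$-dimensional subspaces of $N$ has $p$-power length. As the number of such subspaces is the Gaussian binomial $\binom{m}{k}_p$, which is coprime to $p$ and in fact $\equiv 1\pmod p$, at least one orbit is a singleton, producing a $G$-invariant subspace $W$ with $0<\dim W\le m$. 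Irreducibility forces $W=N$, i.e.\ $k=m$; equivalently, if $d<|N|$ we already contradict $m\ge 2$. In the base case $d=p$ this gives $|N|\le p$ at once, the required contradiction. For the other two ranges I would combine this with the bounds: if $d\le|N|$ the count forces the boundary $d=|N|$, which under $d\le|O_p(G)|/p$ leaves room for order-$d$ subgroups of $O_p(G)$ outside $N$ to re-run the count, and under $d\le\sqrt{|P|}$ keeps $d$ small enough that, after stripping a factor of order $p$, the hypotheses reinstate themselves on $G/N$.

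I expect the two genuinely hard points to be the following. The first is proving $F^*(G)=O_p(G)$: ruling out components carrying a $p$-chief factor of non-abelian type cannot be read off the counting argument and must be extracted from the $\LL$-$\Pi$-property applied to suitably chosen order-$d$ subgroups, with the order-$4$ hypothesis indispensable when $p=2$ and $P$ is not quaternion-free. The second is the clean descent to $G/N$ together with the boundary case $d\ge|N|$, where no order-$d$ subgroup sits inside $N$ and one must instead use order-$d$ subgroups $H$ with $N\le H\le O_p(G)$: here $p$-supersolubility of $G/N$ supplies maximal $G$-invariant $K\ge N$ in $H^G$ with $|H^G/K|=p$, and the $\LL$-$\Pi$-property of $H$ at such $K$ should pin $H$ down and finally force $|N|=p$. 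The numerical hypotheses $d\le\sqrt{|P|}$ and $d\le|P\cap O_{p'p}(G)|/p$ are precisely the conditions guaranteeing that order-$d$ subgroups of the right shape exist in $P$ and that the property transfers to $P/N$ so the induction can close; assembling these, the minimal counterexample cannot exist.
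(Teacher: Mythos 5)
Your counting argument is correct and is a pleasant alternative to the paper's Lemma \ref{invariant}: for a minimal normal $p$-subgroup $N$, the unique maximal $G$-invariant subgroup of $H^G=N$ is trivial, so the $\LL$-$\Pi$-property does reduce to $|G:N_G(H)|$ being a $p$-power, and since the Gaussian binomial counting the order-$d$ subgroups of $N$ is $\equiv 1 \pmod p$, some such subgroup is normal in $G$, forcing $d=|N|$. (The paper gets the same conclusion more cheaply, by taking $H\unlhd P$ inside $N$, so that $N_G(H)\supseteq P$ already has $p'$-index.) However, everything beyond this step has genuine gaps, and they are exactly where the paper's work lies. First, your mechanism for the boundary case is vacuous: if $N\leq H\leq O_p(G)$ with $|H|=d$ and $K$ is a maximal $G$-invariant subgroup of $H^G$ with $|H^G/K|=p$, then $H\nleq K$ (else $H^G\leq K$), so $HK=H^G$ is \emph{normal} in $G/K$ and the condition ``$|G/K:N_{G/K}(HK/K)|$ is a $\pi(HK/K)$-number'' holds trivially; it cannot ``pin $H$ down'' or force $|N|=p$. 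Indeed the paper never forces $|N|=p$ in the $d\geq p^2$ cases: it proves instead that $G/N$ is $p$-supersoluble (Lemmas \ref{little} and \ref{G/N}) and then concludes by formation theory --- either a minimal normal subgroup lies in $\Phi(G)$ and one invokes saturation, or $O_p(G)$ splits as a direct product of at least two minimal normal subgroups (possible because $|O_p(G)|\geq dp$ while each minimal normal has order $\leq d$) and one uses closure under subdirect products.

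Second, the descent to $G/N$, which you treat as routine, is the hard core and your counting engine cannot supply it. Subgroups of $P/N$ of the relevant order have the form $X/N$, and Lemma \ref{ove}(2) only applies when $X$ contains a complement-like subgroup $X_1$ of order $d$ with $X=X_1N$; when $X$ is cyclic no such $X_1$ exists, and this is precisely why the paper needs Thompson critical subgroups, Ward's theorem on quaternion-free $2$-groups, Lemma \ref{Also}, and the delicate Case 1/Case 2 analysis with elements of order $8$ in Lemma \ref{G/N}. The same issue defeats your claim that the base case $d=p$ follows ``at once'' from the counting: the counting only controls \emph{minimal normal} subgroups, not higher $p$-chief factors, and iterating through $G/N$ runs into cyclic subgroups $X$ of order $p^2$ containing $N$, which is why the paper devotes Theorems \ref{small} and \ref{super} (exponent reduction via critical subgroups, plus the generalized $p$-Fitting subgroup and Lemma \ref{hyper}) to this case. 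Third, you explicitly defer the exclusion of nonabelian components with no mechanism; the paper handles it via Qian's bound (Lemma \ref{Qian}), where the hypothesis $d\leq\sqrt{|P|}$ is used exactly to produce an order-$d$ subgroup $L\unlhd P$ inside $P\cap \mathrm{Soc}(G)$ whose normal closure would have a $p$-group chief factor, a contradiction --- while in the $d\leq |P\cap O_{p'p}(G)|/p$ case no component exclusion is needed at all. As it stands, the proposal establishes only the analogue of Lemma \ref{invariant} and leaves the remaining (and major) parts of the proof unproved or resting on a step that gives no information.
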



    \begin{remark}
    	Let $ G = S_{4} $ be the symmetric group of degree $ 4 $. It is easy to see that every subgroup of $ G $ of order $ 4 $  satisfies  $ \LL $-$ \Pi $-property in $ G $. But  $ S_{4} $ is not $ 2 $-supersoluble. Hence  the additional hypothesis ``$ d = p $, or $ d\leq  |P\cap O_{p'p}(G)|/p $, or $ d\leq \sqrt{|P|} $" in Theorem \ref{main} is  essential.
    \end{remark}



\section{Preliminaries}

In this section, we give some lemmas that will be used in our proofs.

\begin{lemma}\label{ove}
	Let $ H $ be a subgroup of $ G $ and $ N\unlhd G $, we have:
	
	{\rm (1)} If $ H $ satisfies $ \LL $-$ \Pi $-property in $ G $, then $ HN $ satisfies  $ \LL $-$ \Pi $-property in $ G $;
	
	{\rm (2)} If $ H $ satisfies $ \LL $-$ \Pi $-property in $ G $, then $ HN/N $ satisfies $ \LL $-$ \Pi $-property in $ G/N $,
	
	{\rm (3)} Assume that $ N\leq H $ or $ (|H|,|N|) = 1 $. Then $ H $ satisfies  $ \LL $-$ \Pi $-property in $ G $ if and only if $ HN/N $ satisfies  $ \LL $-$ \Pi $-property in $ G/N $.
\end{lemma}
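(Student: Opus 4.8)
The plan is to handle the three parts in order, reducing (2) and the forward direction of (3) to (1), and isolating the coprime case of (3) as the genuinely new ingredient.

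For (1) I would first record that $(HN)^{G}=H^{G}N$ since $N\unlhd G$. Fix a maximal $G$-invariant subgroup $K$ of $H^{G}N$, so that $H^{G}N/K$ is a chief factor, i.e.\ a minimal normal subgroup of $G/K$; I must show $|G/K:N_{G/K}(HNK/K)|$ is a $\pi(HNK/K)$-number. I would split into cases according to the position of $H^{G}$ and $N$ relative to $K$. If $H^{G}\le K$ then $KN=H^{G}N$ and $HNK/K=NK/K=H^{G}N/K$ is the whole chief factor, so its normalizer is everything and the index is $1$; the same conclusion holds whenever $NK=H^{G}N$. The one substantial case is $H^{G}\not\le K$ together with $N\le K$. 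Here minimality forces $H^{G}K=H^{G}N$, and $K_{0}:=H^{G}\cap K$ satisfies $H^{G}/K_{0}\cong_{G}H^{G}K/K$, so $K_{0}$ is a maximal $G$-invariant subgroup of $H^{G}$ and the hypothesis applies at $K_{0}$. Since $N\le K$ gives $HNK=HK$ and $H\cap K_{0}=H\cap K$, I would check $\pi(HK_{0}/K_{0})=\pi(HK/K)$ and, using $K_{0}\le K\unlhd G$, that $N_{G}(HK_{0})\le N_{G}(HK)$; passing to indices (all the relevant normalizers contain $K_{0}$, resp.\ $K$) shows $|G/K:N_{G/K}(HK/K)|$ divides $|G/K_{0}:N_{G/K_{0}}(HK_{0}/K_{0})|$, the desired $\pi$-number.

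For (2) I would first invoke (1) to get that $HN$ satisfies the property in $G$. Writing $\bar G=G/N$ and $\bar H=HN/N$, one has $\bar H^{\bar G}=H^{G}N/N$, and the maximal $\bar G$-invariant subgroups of $\bar H^{\bar G}$ are exactly $K/N$ for $K$ a maximal $G$-invariant subgroup of $H^{G}N$ with $N\le K$. For such $K$ we have $HNK=HK$, and under $\bar G/(K/N)\cong G/K$ the required index equals $|G/K:N_{G/K}(HNK/K)|$, a $\pi$-number by (1). The forward implication of (3) is precisely (2), so only the reverse implication needs the extra hypothesis. For that, I would argue on a maximal $G$-invariant subgroup $K$ of $H^{G}$, using that $H^{G}/K$ is minimal normal in $G/K$. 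When $N\le H$ the normal closure contains $N$, and either $N\le K$ (so $K/N$ is admissible for $H/N$ in $G/N$ and the hypothesis transfers directly) or $NK=H^{G}$ (so $HK=H^{G}$ and the index is $1$).

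When $(|H|,|N|)=1$, set $\pi=\pi(H)$, so that $N$ is a $\pi'$-group, and put $N_{0}=N\cap H^{G}$. Minimality gives $N_{0}\le K$ or $N_{0}K=H^{G}$, the latter forcing $HK/K$ to be a $\pi$-subgroup of the $\pi'$-section $H^{G}/K$ and hence $H\le K$. In the remaining case $N_{0}\le K$, I would pass to $\bar G=G/N$ and use the chief factor $H^{G}N/KN\cong_{G}H^{G}/K$, for which $KN/N$ is admissible, so the hypothesis controls $|G:N_{G}(HKN)|$. The crux is to upgrade this to control of $|G:N_{G}(HK)|$: working in $\tilde G=G/K$, the subgroup $\tilde N=KN/K$ is a normal $\pi'$-subgroup whose intersection with the minimal normal subgroup $\tilde M=H^{G}/K$ is trivial (by $N_{0}\le K$), so $\tilde N$ centralizes $\tilde M\supseteq\tilde H=HK/K$; in particular $\tilde N\le N_{\tilde G}(\tilde H)$. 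Since $\tilde H$ is a Hall $\pi$-complement in $\tilde N\tilde H$, a Frattini/Schur--Zassenhaus argument (conjugacy of complements to $\tilde N$) gives $N_{\tilde G}(\tilde H\tilde N)=N_{\tilde G}(\tilde H)$, so $|G:N_{G}(HK)|=|G:N_{G}(HKN)|$ is the required $\pi$-number.

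The main obstacle is exactly this last step: a priori $|G:N_{G}(HK)|$ could acquire a $\pi'$-part relative to $|G:N_{G}(HKN)|$, and eliminating it is precisely where maximality of $K$ in $H^{G}$ (minimal normality of $H^{G}/K$) is indispensable, forcing $\tilde N$ to centralize $\tilde H$; without minimality the coprime reduction would break down.
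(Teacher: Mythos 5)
Your proposal is correct, and for (1), (2), the forward direction of (3), and the $N\le H$ case of (3) it coincides with the paper's argument: the same case split on the position of $N$ relative to $K$, the same identification of $K_0=H^G\cap K$ as a maximal $G$-invariant subgroup of $H^G$, and the same normalizer comparison $N_G(HK_0)\le N_G(HK_0K)=N_G(HK)$. The genuine difference is the crux step of the coprime case of (3), which you isolate correctly. After establishing $N\cap H^G\le K$, you work in $G/K$ and show that $KN/K$ and $H^G/K$ are normal subgroups with trivial intersection, hence commute, so that $HKN/K=(HK/K)\times(KN/K)$ and $HK/K$ is the unique Hall $\pi$-subgroup of this product; this gives $N_G(HKN)\le N_G(HK)$. (You do not actually need Schur--Zassenhaus conjugacy here: in a direct product of a $\pi$-group and a $\pi'$-group, the $\pi$-factor is exactly the set of $\pi$-elements, hence characteristic.) The paper reaches the same inclusion in one line with no further use of coprimality: by the Dedekind modular law, $HKN\cap H^G=HK(N\cap H^G)=HK$, and since $H^G\unlhd G$, every element normalizing $HKN$ normalizes $HKN\cap H^G=HK$. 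So the paper's step is more elementary, while yours explains structurally why the two normalizers agree (the $\pi'$-part acts trivially on the chief factor); both are sound. Two small repairs you should make: first, the branch $N\cap H^G\nleq K$ of your dichotomy, which you leave at ``hence $H\le K$'', needs to be closed explicitly---since $K$ is normal in $G$, $H\le K$ forces $H^G\le K$, contradicting $K<H^G$ (alternatively, $HK/K=1$ makes the required index trivially $1$; the paper instead rules this branch out via $O^{\pi'}(H^G)=H^G$). Second, your closing remark slightly misattributes the centralization to minimality of $H^G/K$: the centralization only needs that $KN/K$ and $H^G/K$ are normal with trivial intersection; minimality is what delivered the dichotomy yielding $N\cap H^G\le K$ in the first place.
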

\begin{proof}[\bf{Proof}]
	(1)  Let $ K $ be a maximal $ G $-invariant subgroup  of $ (HN)^{G} $. Suppose that $ N\leq K $. Note that $ (HN)^{G}/K=H^{G}N/K=H^{G}K/K\cong H^{G}/(H^{G}\cap K) $ is a chief factor of $ G $. Since $ H $ satisfies $ \LL $-$ \Pi $-property in $ G $, we have that $ | G : N _{G} (H(H^{G}\cap K))| $ is a $ \pi (H(H^{G}\cap K)/(H^{G}\cap K)) $-number. Note that $ N_{G}(HK)=N_{G}((H^{G}\cap HK)K)\geq N_{G}(H^{G}\cap HK) $ and $ H(H^{G}\cap K)/(H^{G}\cap K)\cong H/(H\cap K)\cong HK/K $. We see that  $ | G  : N _{G} (HK)| $ is a $ \pi (HK/K) $-number. This meas that $ HN $ satisfies  $ \LL $-$ \Pi $-property in $ G $, as wanted. Suppose that $ N\nleq K $. Since $ (HN)^{G} /K $ is a chief factor of $ G $, we have $ (HN)^{G} = NK = HNK $, thus $ HN $ satisfies  $ \LL $-$ \Pi $-property in $ G $, as wanted.
	
	(2) We may assume $ N \leq H $ by (1). Let $ M/N $ be a maximal $ G/N $-invariant subgroup of $ (H/N)^{G/N} $. Clearly,  $ H^{G} /M $ is a chief factor of $ G $. Since $ H $ satisfies $ \LL $-$ \Pi $-property in $ G $, we have that  $ |G/M :N_{G/M} (HM/M)| $ is a $ \pi (HM/M) $-number. Write $ \overline{G}=G/N $. Then  $ (\overline{H})^{\overline{G}} /\overline{M} $ is a chief factor of $ \overline{G} $. Hence $ |\overline{G}/\overline{M} :N_{\overline{G}/\overline{M}} (\overline{HM}/\overline{M})| $ is a $ \pi (\overline{HM}/\overline{M}) $-number. This means that $ HN/N $ satisfies $ \LL $-$ \Pi $-property in $ G/N $.
	
	(3) We only need to prove the sufficiency. Assume that $ HN/N $ satisfies $ \LL $-$ \Pi $-property in $ G/N $ and let $ H^{G} /M $ be a chief factor of $ G $. Set $ \overline{G}=G/N $.
	
	Suppose that $ N \leq H $. If $ N\leq M $, then $ (\overline{H})^{\overline{G}} /\overline{M} $ is a chief factor of $ \overline{G} $. Hence  $ |\overline{G}/\overline{M} :N_{\overline{G}/\overline{M}} (\overline{HM}/\overline{M})| $ is a $ \pi (\overline{HM}/\overline{M}) $-number, and so $ |G :N_{G} (HM)| $ is a $ \pi (HM/M) $-number.
	This means that $ H $ satisfies $ \LL $-$ \Pi $-property in $ G $, as desired. If $ N\nleq M $, then $ H^{G}=MN $. It follows that  $ | G  : N _{G} (HMN)|=| G  : N _{G} (H^{G})|=1 $, and hence $ H $ satisfies  $ \LL $-$ \Pi $-property in $ G $.
	
	Suppose that $ (|H|,|N|) = 1 $. Let $ \pi=\pi(H) $.  Note that $ O^{\pi'}(H^{G}) = H^{G} $ and $$ (\overline{H})^{\overline{G}}/\overline{M}\cong H^{G}N/MN\cong H^{G}/( H^{G}\cap MN)=H^{G}/M(H^{G}\cap N). $$
	If $ H^{G}\cap N\nleq M $, then $ H^{G}=M(H^{G}\cap N) $, and so $ H^{G}N=MN $. This yields  that $ H^{G}/M \leq MN/M = N/(N\cap M) $
	is a $ \pi' $-group, a contradiction. Hence $ H^{G} \cap N \leq M $. Thus $ (\overline{H})^{\overline{G}}/\overline{M} $ is a chief factor of $ \overline{G} $. Then $ |\overline{G}:N_{\overline{G}}(\overline{H}\overline{M})| $ is a $ \pi(\overline{HM}/\overline{M}) $-number. Thus $ |G:N_{G}(HMN)| $ is a $ \pi(HMN/MN) $-number. Since $ H^{G}\cap N\leq M\leq HM $, we have $ H^{G}\cap N= M\cap N= HM\cap N $. Clearly, $ HMN\cap H^{G}=HM(N\cap H^{G})=HM(M\cap N)=HM $. Note that $ |\overline{HM}/\overline{M}|=|HM/(HM\cap N)||(M\cap N)/M|=|HM/M| $. Therefore, $ |G:N_{G}(HM)| $ is a $ \pi(HM/M) $-number. It follows that  $ H $ satisfies  $ \LL $-$ \Pi $-property in $ G $.
\end{proof}

If $ P $ is either an odd order $ p $-group or a quaternion-free $ 2 $-group, then we use $ \Omega(P) $ to denote the subgroup $ \Omega_{1} (P) $.  Otherwise, $ \Omega (P) = \Omega_{2} (P) $.

\begin{lemma}\label{hypercenter}
	Let $ P $ be a normal $ p $-subgroup of $ G $ and $ D $ a Thompson critical subgroup of $ P $ \cite[page 186]{Gorenstein-1980}. If $ P/\Phi(P) \leq Z_{\UU}(G/\Phi(P)) $ or  $ \Omega(D) \leq Z_{\UU}(G) $, then $ P \leq  Z_{\UU}(G) $.
\end{lemma}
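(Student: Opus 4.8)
The plan is to treat both hypotheses by a single device. Write $C=C_G(P)$ and $\overline G=G/C$; I claim it suffices to produce a normal $p$-subgroup $Q$ of $\overline G$ such that $\overline G/Q$ is abelian of exponent dividing $p-1$. Granting this, let $V=H/K$ be any $G$-chief factor lying inside $P$. A normal $p$-subgroup acting on the elementary abelian $p$-group $V$ fixes a nontrivial, $G$-invariant subgroup, which by irreducibility must be all of $V$; hence $Q$ centralises $V$, and $V$ becomes an irreducible module for the abelian $p'$-group $\overline G/Q$. Over $\mathbb F_p$ such a module is one-dimensional, since the group acts diagonalisably with eigenvalues among the $(p-1)$-st roots of unity, all of which already lie in $\mathbb F_p$; thus $|V|=p$. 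As this applies to every chief factor of $G$ below $P$, the definition of the supersoluble hypercenter gives $P\le Z_\UU(G)$. So the whole problem reduces to constructing $Q$.

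To build $Q$ I would feed in the $G$-invariant section $W$ furnished by the hypothesis, namely $W=P/\Phi(P)$ in the first case and $W=\Omega(D)$ in the second; here $D$, being characteristic in $P\unlhd G$, is normal in $G$, and hence so is $\Omega(D)$. In either case the containment $W\le Z_\UU(\cdot)$ yields a $G$-invariant chain $1=W_0<\cdots<W_n=W$ with every $|W_i/W_{i-1}|=p$. Let $\rho\colon G\to\mathrm{Aut}(W)$ be the conjugation action and $T=\mathrm{im}\,\rho$, which stabilises this chain. Because the factors have order $p$, $T$ acts on them as scalars, giving $T\to\prod_i\mathrm{Aut}(W_i/W_{i-1})\cong\prod_i\mathbb F_p^{\times}$ with image abelian of exponent dividing $p-1$ and kernel the stability group of the chain, which is a $p$-group. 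Hence $G/\ker\rho\cong T$ has a normal $p$-subgroup with quotient abelian of exponent dividing $p-1$. It then remains to descend from $\ker\rho$ to $C=C_G(P)$: I would show that $\ker\rho/C$ is a $p$-group, so that the preimage in $\overline G$ of the normal $p$-subgroup of $G/\ker\rho$ is the desired $Q$.

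The assertion that $\ker\rho/C$ is a $p$-group is exactly the statement that a $p'$-element of $G$ centralising $W$ already centralises $P$: for $W=P/\Phi(P)$ this is the Burnside basis theorem, and for $W=\Omega(D)$ it is the faithfulness of the action of $p'$-automorphisms on a Thompson critical subgroup, \cite[5.3.11]{Gorenstein-1980}. This coprime-control step is the crux of the argument, and it is precisely where the definition of $\Omega$ is calibrated: a faithful $p'$-action is carried by $\Omega_1(D)$ when $p$ is odd or $P$ is quaternion-free, whereas for the remaining $2$-groups one must pass to $\Omega_2(D)$. I therefore expect the main obstacle to be invoking the critical-subgroup theorem correctly and verifying the $p=2$ case under the quaternion-free dichotomy; by contrast the Frattini half is routine, and once control is established the module-theoretic conclusion of the first paragraph is immediate.
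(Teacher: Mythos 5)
Your overall strategy is legitimate and, as it happens, more self-contained than the paper's own treatment: the paper disposes of Lemma \ref{hypercenter} in one line by citing Lemma 2.8 of \cite{Chen-xiaoyu-2016}, so you are attempting a genuine from-scratch proof. The scaffolding is correct. Indeed $P\le Z_{\UU}(G)$ follows once $G/C_G(P)$ is shown to possess a normal $p$-subgroup with quotient abelian of exponent dividing $p-1$: a normal $p$-subgroup of $G/C_G(P)$ has nontrivial, $G$-invariant fixed points on each chief factor below $P$, hence acts trivially on it, and irreducible $\mathbb{F}_p$-modules for abelian groups of exponent dividing $p-1$ are one-dimensional. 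Likewise, the stability group of a $G$-invariant chain with factors of order $p$ is a $p$-group, and the Frattini case of the coprime-control step is exactly Burnside. All of this is sound.

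The genuine gap is the step you yourself call the crux, and it is not a technicality: for $W=\Omega(D)$ you must know that a $p'$-automorphism of $P$ acting trivially on $\Omega(D)$ is trivial, but the result you cite, \cite[Theorem 5.3.11]{Gorenstein-1980}, is Thompson's critical subgroup theorem, which gives faithfulness of $p'$-automorphisms on $D$ itself, not on the in general much smaller subgroup $\Omega(D)$. Passing from ``trivial on $\Omega(D)$'' to ``trivial on $D$'' is where all the real content of Lemma \ref{hypercenter} lives: for $p$ odd it is the companion theorem in the same section of \cite{Gorenstein-1980} (Theorem 5.3.13), whose proof needs $\exp\Omega_1(D)=p$ and the class-two structure of $D$; for $p=2$ no such statement appears in \cite{Gorenstein-1980} at all, and one needs exactly the quaternion-free dichotomy --- Ward-type results (cf.\ Lemma \ref{charcteristic}) when $\Omega(D)=\Omega_1(D)$, and exponent-$4$ arguments (cf.\ Lemma \ref{critical}) when $\Omega(D)=\Omega_2(D)$. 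Your proposal neither carries this out nor cites a correct source; it ends by saying you ``expect the main obstacle to be \dots\ verifying the $p=2$ case under the quaternion-free dichotomy''. Deferring that verification means the heart of the lemma is missing: what you have is a correct reduction of Lemma \ref{hypercenter} to precisely the automorphism-theoretic fact that the lemma encodes.
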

\begin{proof}[\bf{Proof}]
	By \cite[Lemma 2.8]{Chen-xiaoyu-2016}, the conclusion follows.
\end{proof}

\begin{lemma}[{\cite[Lemma 2.10]{Chen-xiaoyu-2016}}]\label{critical}
	Let $ D $ be a Thompson critical subgroup of a nontrivial $ p $-group $ P $.
	
	
	{\rm (1)} If $ p > 2 $, then the exponent of $ \Omega_{1}(D) $ is $ p $.
	
	{\rm (2)} If $ P $ is an abelian $ 2 $-group, then the exponent of $ \Omega_{1}(D) $ is $ 2 $.
	
	{\rm (3)} If $ p = 2 $, then the exponent of $ \Omega_{2}(D) $ is at most $ 4 $.
\end{lemma}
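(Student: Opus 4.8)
The plan is to reduce everything to two structural features of a Thompson critical subgroup $D$ of $P$ and then run a short power–commutator computation. Recall from \cite[page 186]{Gorenstein-1980} that $D$ is characteristic in $P$ and satisfies $\Phi(D)\le Z(D)$ (together with $[P,D]\le Z(D)$ and $C_{P}(D)=Z(D)$, which I will not need). From $\Phi(D)\le Z(D)$ alone I would extract the two facts that drive the argument: since $D'=[D,D]\le\Phi(D)\le Z(D)$, the group $D$ has nilpotency class at most $2$; and since $D^{p}\le\Phi(D)\le Z(D)$, we have $x^{p}\in Z(D)$ for every $x\in D$. Part (2) is then immediate: when $P$ is abelian so is $D$, and $\Omega_{1}(D)$ is just the set of elements of order dividing $2$, a subgroup of exponent $2$.

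Next I would set up the computational engine available in the class-$2$ group $D$. Because every commutator $[x,y]$ is central, commutation is bilinear, so $[x^{n},y]=[x,y]^{n}$, and the collection formula
\[
(xy)^{n}=x^{n}y^{n}\,[y,x]^{\binom{n}{2}}
\]
holds for all $x,y\in D$ and all $n$. Feeding $x^{p}\in Z(D)$ into the bilinearity yields the crucial relation $[x,y]^{p}=[x^{p},y]=1$; that is, $D'$ has exponent dividing $p$. This one relation is exactly what makes the binomial term in the collection formula tractable.

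For part (1), with $p$ odd, take $x,y\in D$ with $x^{p}=y^{p}=1$. The collection formula gives $(xy)^{p}=[y,x]^{\binom{p}{2}}$, and since $p$ is odd we have $p\mid\binom{p}{2}$, so writing $\binom{p}{2}=pm$ and using $[y,x]^{p}=1$ forces $(xy)^{p}=1$. Hence the elements of $D$ of order dividing $p$ are closed under products and inverses, forming a subgroup that equals $\Omega_{1}(D)$ and has exponent $p$. I expect the only delicate point to be part (3), the case $p=2$, where $\binom{2}{2}=1$ is \emph{not} divisible by $2$, so the commutator term cannot simply be discarded; one must instead use $[y,x]^{2}=1$. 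Concretely, for $x,y$ with $x^{4}=y^{4}=1$ I would compute $(xy)^{2}=x^{2}y^{2}[y,x]$, and then, squaring and using centrality together with $(x^{2}y^{2})^{2}=x^{4}y^{4}[y^{2},x^{2}]=[y,x]^{4}$, obtain $(xy)^{4}=[y,x]^{6}=1$ since $[y,x]^{2}=1$. As before this shows that the elements of order dividing $4$ form a subgroup, namely $\Omega_{2}(D)$, of exponent at most $4$. The main obstacle is thus isolated in the $p=2$ bookkeeping: the failure of $2\mid\binom{2}{2}$ is compensated precisely by the strength of $\Phi(D)\le Z(D)$ in the form $[y,x]^{2}=1$, which also kills the extra factor $[y,x]^{4}$ arising from $(x^{2}y^{2})^{2}$.
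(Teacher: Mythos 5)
Your proof is correct, but note that the paper contains no argument of its own for this lemma: it is quoted verbatim from \cite{Chen-xiaoyu-2016}, so the comparison is with the cited literature rather than with an in-paper proof. What you have written is essentially the standard derivation that the citation ultimately rests on: from $\Phi(D)\le Z(D)$ you extract $D'\le Z(D)$ (class at most $2$, hence bilinearity of commutation and the collection formula $(xy)^{n}=x^{n}y^{n}[y,x]^{\binom{n}{2}}$) and $x^{p}\in Z(D)$ (hence $[x,y]^{p}=[x^{p},y]=1$, so $D'$ has exponent dividing $p$), and then the closure computations for $\{x: x^{p}=1\}$ when $p$ is odd and for $\{x: x^{4}=1\}$ when $p=2$ all check out, including the $p=2$ bookkeeping $(xy)^{4}=[y,x]^{6}=1$. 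Your approach correctly handles the standard subtlety that $\Omega_{i}(D)$ is only \emph{generated} by elements of small order, by showing the relevant set is itself a subgroup. One cosmetic point: parts (1) and (2) assert the exponent is \emph{exactly} $p$ (resp.\ $2$), whereas your argument shows it divides $p$ (resp.\ $2$); exactness needs $\Omega_{1}(D)\neq 1$, i.e.\ $D\neq 1$, and this is precisely where the property $C_{P}(D)=Z(D)$ that you explicitly set aside is used (if $D=1$ then $P=C_{P}(D)=Z(D)=1$, contradicting the nontriviality of $P$). That is a one-line patch, not a gap in the substance of the argument.
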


\begin{lemma}[{\cite[Lemma 3.1]{Ward}}]\label{charcteristic}
	Let $ P $ be a nonabelian quaternion-free $ 2 $-group. Then $ P $ has a characteristic subgroup of index $ 2 $.
\end{lemma}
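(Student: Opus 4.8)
The plan is to translate the problem into linear algebra over $\mathbb{F}_2$ on the Frattini quotient and then to run an induction whose only genuinely group-theoretic input is the quaternion-free hypothesis. Since $P$ is a $2$-group we have $\Phi(P)=P'P^2=P^2$, so $V:=P/\Phi(P)$ is a vector space over $\mathbb{F}_2$ of dimension $d=d(P)\geq 2$ (as $P$ is nonabelian). Every subgroup of index $2$ contains $\Phi(P)$, so the subgroups of index $2$ in $P$ correspond bijectively to the hyperplanes of $V$, and such a subgroup is characteristic in $P$ precisely when the associated hyperplane is invariant under the image $A\leq\mathrm{GL}(V)$ of the natural map $\mathrm{Aut}(P)\to\mathrm{GL}(V)$. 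Thus it suffices to produce an $A$-invariant hyperplane of $V$.

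I would argue by induction on $|P|$. If some proper nontrivial characteristic subgroup $K$ of $P$ has $P/K$ nonabelian, then $P/K$ is a nonabelian quaternion-free $2$-group (being a quotient of $P$) of smaller order, so by induction it has a characteristic subgroup of index $2$, whose preimage in $P$ is characteristic of index $2$; we are done. Hence I may assume the terminal situation in which every proper nontrivial characteristic quotient of $P$ is abelian. Taking $K=\Omega_1(Z(P))$, which is characteristic and proper (otherwise $P$ is elementary abelian), this forces $P'\leq\Omega_1(Z(P))$, so $P$ has class $2$ and $\exp(P')=2$. Then for all $x,y$ one has $[x^2,y]=[x,y]^2[x,y,x]=1$, whence $P^2\leq Z(P)$ and therefore $\Phi(P)=P^2\leq Z(P)$.

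In this terminal case the relevant maps are $A$-equivariant and close to linear. The commutator induces a well-defined alternating $\mathbb{F}_2$-bilinear form $B\colon V\times V\to P'$, $B(\bar x,\bar y)=[x,y]$, which is nonzero because $P$ is nonabelian, and---using the class-$2$ identity $(xy)^2=x^2y^2[y,x]$---the squaring map induces a compatible quadratic map whose polarization is $B$. Quaternion-freeness says exactly that $V$ has no \emph{anisotropic quaternion plane}: no pair $\bar x,\bar y$ spanning a $2$-dimensional subspace with $[x,y]\neq 1$ and $x^2\equiv y^2\equiv(xy)^2\equiv[x,y]$, since such a pair would generate a section isomorphic to $Q_8$. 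I would then build an $A$-invariant hyperplane from this constraint, splitting on whether $P^2\leq P'$. If $P^2\nleq P'$, the squaring map descends to a nonzero $A$-equivariant \emph{linear} map $V\to P^2/(P^4P')$, whose kernel is an invariant subspace (indeed a hyperplane as soon as the image is a line). If $P^2\leq P'$, the squaring map is a genuine $A$-invariant quadratic form $q\colon V\to P'$, and the ban on anisotropic planes forces the nondegenerate part of $q$ to follow the $D_8$-pattern (rank $2$) rather than that of $Q_8$ or of a larger extraspecial group, from which a canonical $A$-invariant hyperplane can be read off.

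I expect the quadratic-form case $P^2\leq P'$ to be the main obstacle, and it is precisely here that quaternion-freeness is indispensable: $Q_8$ is the minimal counterexample to the theorem, its three index-$2$ subgroups being permuted cyclically by an automorphism of order $3$, and this is exactly the anisotropic-plane configuration that the hypothesis forbids. By contrast, the bijection between index-$2$ subgroups and hyperplanes, the inductive reduction to the terminal case, and the verification that $B$ and $q$ are well defined and $A$-equivariant are all routine.
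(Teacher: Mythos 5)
First, a point of reference: the paper contains no proof of this lemma at all --- it is quoted verbatim from Ward \cite[Lemma 3.1]{Ward} --- so your attempt must be judged as a free-standing proof. Your scaffolding is correct and genuinely in the right spirit: the dictionary between index-$2$ subgroups of $P$ and hyperplanes of $V=P/\Phi(P)$ invariant under the image of $\mathrm{Aut}(P)$ in $\mathrm{GL}(V)$, the induction reducing to the case where every proper nontrivial characteristic quotient is abelian, the consequences $P'\leq \Omega_1(Z(P))$, class $2$, $\Phi(P)=P^{2}\leq Z(P)$, and the well-definedness and equivariance of $B$ and of the squaring map are all fine. The problem is that both of your terminal cases end in an assertion rather than an argument, and those assertions are precisely where the content of Ward's lemma lives.

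Concretely: in Case A ($P^{2}\nleq P'$) your invariant subspace $\ker\bar{s}$ is a hyperplane only when $P^{2}P'/P^{4}P'$ is a line; you concede this in a parenthesis and never treat rank $\geq 2$, and nothing in your reduction excludes it --- you would have to show that quaternion-freeness forbids it, which is a genuine argument. (For instance, the class-$2$ group of order $32$ with $x^{4}=y^{4}=[x,y]^{2}=1$ and $z=[x,y]$ central outside $\langle x^{2},y^{2}\rangle$ has rank-$2$ image and acquires a $Q_8$ section upon factoring out $\langle x^{2}z,\,y^{2}z\rangle$; whether this mechanism kills \emph{every} rank-$\geq 2$ configuration is exactly what must be proved.) In Case B ($P^{2}\leq P'$) the phrase ``a canonical $A$-invariant hyperplane can be read off'' hides the hard part. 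Even when $\dim_{\mathbb{F}_2}P'=1$, your $D_8$-pattern gives $V=R\perp H$ with $R=\mathrm{rad}(B)$ and $H$ a hyperbolic plane $\langle e,f\rangle$; the span of the anisotropic vectors is the invariant hyperplane $R\oplus\langle e+f\rangle$ only if $q|_{R}=0$, whereas if $q(\bar{x})\neq 0$ for some $\bar{x}\in R$ the anisotropic vectors span all of $V$ and your recipe produces nothing. (That subcase can in fact be excluded --- $u=e+\bar{x}$, $v=f+\bar{x}$ is then an anisotropic pair generating a $Q_8$ --- but you never make this move.) Worse, $P'$ need not be a line: quaternion-freeness then becomes a family of conditions on the scalar forms $q_{\lambda}=\lambda\circ q$, $0\neq\lambda\in\mathrm{Hom}(P',\mathbb{F}_2)$, whose radicals $R_{\lambda}$ and hyperbolic planes vary with $\lambda$, while $\mathrm{Aut}(P)$ permutes the $\lambda$'s; the per-$\lambda$ hyperplanes need not coincide, and their intersection can have codimension greater than $1$. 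Resolving this multi-form situation (or proving it cannot arise in your terminal case) is the substance of Ward's lemma. As written --- and as you yourself signal by calling the quadratic-form case ``the main obstacle'' that you ``expect'' to handle --- your proposal is a correct reduction plus a plan for the decisive step, not a proof.
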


\begin{lemma}[{\cite[Lemma 2.12]{Qiu-partial}}]\label{hyper}
	Let $ N $ be a normal subgroup of $ G $ and $ P\in {\rm Syl}_{p}(N) $. Then $ N\leq Z_{\UU_{p}}(G) $ if and  only if  $ P $ satisfies the following:
	
	\begin{enumerate}[\rm(1)]
		\item All  subgroups of $ P $ of order $ p $ are contained in $ Z_{\UU_{p}}(G) $;
		
		\item If  $ P $ is not quaternion-free, we  further suppose that all cyclic subgroups of $ P $ of order $ 4 $ are contained in $ Z_{\UU_{p}}(G) $.
		
	\end{enumerate}

\end{lemma}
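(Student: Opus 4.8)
The necessity is immediate: if $N\le Z_{\UU_{p}}(G)$, then since $P\le N$ every subgroup of $P$ lies in $Z_{\UU_{p}}(G)$, so in particular (1) and (2) hold. For the sufficiency the plan is to induct (say on $|G|+|N|$) and to write $Z=Z_{\UU_{p}}(G)$. First I would reduce to the case $O_{p'}(N)=1$. If $R=O_{p'}(N)\ne 1$, then $R\unlhd G$ and $PR/R\cong P$ is a Sylow $p$-subgroup of $N/R$; as $R$ is a $p'$-group, the order-$p$ and cyclic order-$4$ subgroups of $PR/R$ correspond bijectively to those of $P$, and using $ZR/R\le Z_{\UU_{p}}(G/R)$ the hypotheses pass to $G/R$. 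Induction gives $N/R\le Z_{\UU_{p}}(G/R)$, and since $R$ is a $p'$-group the $p$-$G$-chief factors below $N$ are exactly those lying above $R$; hence they all have order $p$ and $N\le Z$. So I may assume $O_{p'}(N)=1$.

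Next I would pin down the structure of $N$. Since $O_{p'}(N)=1$, every component of $N$ has order divisible by $p$; if $K$ were such a component, I would choose $X\le K\cap P$ of order $p$ with nontrivial image in $K/\Z(K)$, note $X\le Z$ by (1), and observe that $Z\cap E(N)\unlhd G$ would then contain a whole nonabelian $p$-$G$-chief factor of order exceeding $p$ inside $Z$, which is impossible. Hence $E(N)=1$, so $F^{*}(N)=O_{p}(N)$ and therefore $C_{N}(O_{p}(N))\le O_{p}(N)$.

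The core step is to prove $P_{1}:=O_{p}(N)\le Z$; note $P_{1}$ is a normal $p$-subgroup of $G$. Let $D$ be a Thompson critical subgroup of $P_{1}$. By the convention preceding Lemma \ref{hypercenter} together with Lemma \ref{critical} (with Lemma \ref{charcteristic} available for the quaternion-free case), $\Omega(D)$ is generated by subgroups of $P_{1}$ of order $p$ and, only when $P$ fails to be quaternion-free, by cyclic subgroups of order $4$; all of these are subgroups of $P$, so by (1) and (2) they lie in $Z$. Here I would use the decisive observation: any such generator $X$ satisfies $X\le P_{1}\le O_{p}(G)$, so $\langle X^{G}\rangle$ is a normal $p$-subgroup of $G$ lying in $Z_{\UU_{p}}(G)$; every $G$-chief factor below it is then a $p$-$G$-chief factor lying in $Z_{\UU_{p}}(G)$ and so has order $p$, which forces $\langle X^{G}\rangle\le Z_{\UU}(G)$ and hence $X\le Z_{\UU}(G)$. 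Thus $\Omega(D)\le Z_{\UU}(G)$, and Lemma \ref{hypercenter} gives $P_{1}\le Z_{\UU}(G)\le Z_{\UU_{p}}(G)=Z$. To finish, I would show $N/P_{1}$ is a $p'$-group: fixing a $G$-chief series of $P_{1}$, each factor has order $p$ (as $P_{1}\le Z$), so $N$ acts on it through a subgroup of $C_{p-1}$; the intersection $N_{0}$ of the centralizers in $N$ of these factors is normal in $N$, its image in $\mathrm{Aut}(P_{1})$ stabilizes the series and is thus a $p$-group, and since $C_{N}(P_{1})=\Z(P_{1})$ this makes $N_{0}$ a $p$-group, whence $N_{0}\le P_{1}$. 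Therefore $N/P_{1}$, being a quotient of the $p'$-group $N/N_{0}$, is a $p'$-group; consequently all $p$-$G$-chief factors below $N$ lie below $P_{1}$ and have order $p$, so $N\le Z_{\UU_{p}}(G)$.

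I expect the core step $O_{p}(N)\le Z$ to be the main obstacle, for two reasons. The mechanism that lets Lemma \ref{hypercenter} apply is the upgrade from ``$X\le Z_{\UU_{p}}(G)$'' to ``$X\le Z_{\UU}(G)$'', which is valid only because $X$ sits inside the normal $p$-subgroup $O_{p}(G)$; keeping every relevant subgroup inside $O_{p}(N)$ throughout is exactly what makes this legitimate, and the order-$4$ hypothesis (2) is precisely what is needed to reach the generators of $\Omega_{2}(D)$ in the non-quaternion-free case. The closing passage, forcing $N/O_{p}(N)$ to be a $p'$-group, is the other delicate point: it rests on $C_{N}(O_{p}(N))\le O_{p}(N)$ and on the fact that the stabiliser of a chief series with order-$p$ factors is a $p$-group, so the reduction to $O_{p'}(N)=1$ is essential there.
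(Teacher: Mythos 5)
A preliminary remark: the paper never proves Lemma \ref{hyper} at all --- it is imported verbatim from \cite[Lemma 2.12]{Qiu-partial} --- so there is no in-paper proof to compare you against; your proposal has to stand on its own. Its overall architecture is the standard one and most of it is sound: the necessity is trivial, the reduction to $O_{p'}(N)=1$ (Step 1) is correct, the core step (Step 3) is correct --- in particular your ``decisive observation'' that a subgroup $X\le O_p(G)$ with $X\le Z_{\UU_p}(G)$ satisfies $\langle X^{G}\rangle\le Z_{\UU}(G)$, which is exactly what makes Lemma \ref{hypercenter} applicable to $O_p(N)$ --- and the closing stabilizer argument (Step 4) showing $N/O_p(N)$ is a $p'$-group is also correct, granted the earlier steps.

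The genuine gap is in Step 2, at the words ``choose $X\le K\cap P$ of order $p$ with nontrivial image in $K/Z(K)$.'' Such an $X$ need not exist, and the failure occurs precisely in the situation that hypothesis (2) was invented for. Take $p=2$ and a component $K\cong \mathrm{SL}(2,5)$ (or any $\mathrm{SL}(2,q)$ with $q$ odd, or $2\cdot A_{7}$): the Sylow $2$-subgroups of $K$ are generalized quaternion, so $K$ has a \emph{unique} involution, and it is central. Hence every subgroup of order $2$ of $K\cap P$ maps trivially to $K/Z(K)$, hypothesis (1) alone can never eliminate such a component, and your contradiction evaporates exactly where it is most needed. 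The repair exists but must be written down: since $K\cap P$ is generalized quaternion, $P$ contains a copy of $Q_{8}$ and so is not quaternion-free; thus hypothesis (2) is in force, and one may instead take $X\le K\cap P$ cyclic of order $4$ (every element of order $4$ of a generalized quaternion group squares to the central involution, so its image in $K/Z(K)$ is nontrivial), after which your argument --- $K\le Z_{\UU_p}(G)\cap E(N)$, hence $K^{G}\le Z_{\UU_p}(G)$, yielding a nonabelian $p$-$G$-chief factor below $Z_{\UU_p}(G)$ --- goes through. Note also that for odd $p$ your existence claim is true but not free: if all elements of order $p$ of $K$ were central and $Z(K)_{p}$ is cyclic, the Sylow $p$-subgroup of $K$ would have a unique subgroup of order $p$, hence be cyclic ($p$ odd), forcing the $p$-part of the Schur multiplier of $K/Z(K)$ to vanish and so $Z(K)_{p}=1$, a contradiction; the rare case of noncyclic $Z(K)_{p}$ (covers such as $3^{2}\cdot\mathrm{PSU}(4,3)$) needs a separate word. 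So Step 2 should either carry out this case analysis, including the quaternion-free side conditions, or simply quote a known criterion --- for instance It\^{o}'s theorem that a group all of whose elements of order $p$ (and of order $4$ when $p=2$) are central is $p$-nilpotent, applied to the perfect group $K$, or the Fitting-type embedding criterion \cite[Lemma 2.13]{Su-2014} --- which is how the cited source and related papers dispose of components.
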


\begin{lemma}\label{less}
	Let $ G $ be a $ p $-soluble group of order divisible by $ p $. Then $ |G/O_{p'p}(G)|_{p} <
	|O_{p'p}(G)|_{p} $ .
\end{lemma}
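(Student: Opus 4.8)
The plan is to reduce the statement to a bound on $p$-soluble linear groups and to prove that bound by induction on the dimension. First I would quotient out $O_{p'}(G)$: since it is a $p'$-group, both $|G/O_{p'p}(G)|_p$ and $|O_{p'p}(G)|_p$ are unchanged on passing to $\bar G=G/O_{p'}(G)$, and $O_{p'p}(\bar G)=O_p(\bar G)$. Hence we may assume $O_{p'}(G)=1$, so that $O_{p'p}(G)=O_p(G)=:P$ and the assertion becomes $|G:P|_p<|P|$ (equivalently $|G|_p<|P|^2$). Here $P\ne 1$, because a $p$-soluble group with $O_{p'}(G)=1$ and $p\mid|G|$ has $O_p(G)\ne 1$.

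Next I would induct on $|G|$ and reduce to the case where $P$ is elementary abelian. If $G/P$ is a $p'$-group then $|G:P|_p=1<|P|$ and we are done, so assume not. If $\Phi(P)\ne 1$, note $\Phi(P)$ is characteristic in $P\trianglelefteq G$, and by Gasch\"utz's theorem $\Phi(P)\le\Phi(G)$; a Frattini argument (lift a Hall $p'$-subgroup of the preimage and use that $\Phi(G)$ consists of non-generators) then shows $O_{p'}(G/\Phi(P))=1$, while clearly $O_p(G/\Phi(P))=P/\Phi(P)$. Applying the inductive hypothesis to $G/\Phi(P)$ gives $|G:P|_p=|(G/\Phi(P)):(P/\Phi(P))|_p<|P/\Phi(P)|\le|P|$, as desired. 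So we may assume $\Phi(P)=1$, i.e. $P$ is elementary abelian, say of order $p^d$.

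Now the Hall--Higman lemma ($C_G(O_p(G))\le O_p(G)$ when $O_{p'}(G)=1$) together with $P$ abelian yields $C_G(P)=P$, so $G/P=G/C_G(P)$ embeds into $\mathrm{Aut}(P)=GL_d(p)$; moreover $G/P$ is $p$-soluble and $O_p(G/P)=1$, since a nontrivial normal $p$-subgroup of $G/P$ would pull back to a normal $p$-subgroup of $G$ strictly larger than $P=O_p(G)$. Everything therefore reduces to the linear estimate: \emph{if $H\le GL_d(p)$ is $p$-soluble with $O_p(H)=1$, then $|H|_p<p^d$}; applied to $H=G/P$ this gives $|G:P|_p<p^d=|P|$, completing the argument.

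The main obstacle is this last linear bound. I would prove it by induction on $d$ via Clifford theory: set $N=O_{p'}(H)\ne 1$, decompose $V=\mathbb{F}_p^{\,d}$ into the homogeneous components of $V|_N$, which $H$ permutes, and estimate $|H|_p$ by combining the $p$-part of the induced permutation action (a section of some $S_t$ with $t\le d$, whose $p$-part is $<p^t$) with the inductively bounded contributions of the stabilizers acting on individual components. The delicate point — and where I expect to spend the most care — is that on restricting the action to a single homogeneous component or an $H$-invariant summand, the resulting quotient of $H$ need not inherit $O_p=1$; so the inductive hypothesis must be framed so as to control the normal $p$-core under the Clifford correspondence, or else one invokes the known bounds on the orders of $p$-soluble completely reducible linear groups. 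The reductions down to $GL_d(p)$ are routine; the representation-theoretic inequality $|H|_p<p^d$ is the crux.
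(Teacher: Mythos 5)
Your reduction is correct and cleanly executed: passing to $G/O_{p'}(G)$, the Gasch\"utz--Frattini step that lets you assume $P=O_p(G)$ is elementary abelian, and the Hall--Higman argument giving $C_G(P)=P$, hence an embedding of $G/P$ into $GL_d(p)$ with trivial $p$-core, are all valid. But this is the routine half of the problem. The entire content of the lemma sits in the inequality you isolate at the end --- if $H\le GL_d(p)$ is $p$-soluble with $O_p(H)=1$, then $|H|_p<p^d$ --- and your proposal does not prove it. That inequality is precisely the theorem of Wolf that the paper invokes: the paper's whole proof of this lemma is the citation \cite[Corollary 1.9(iii)]{Wolf-1984}, and Wolf's corollary is already stated in the group-theoretic form $|G/O_{p'p}(G)|_p<|O_{p'p}(G)|_p$, so even your reduction is subsumed by the reference rather than complementary to it.

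The Clifford-theoretic sketch, as framed, does not close, and the obstruction is worse than the one you flag. Beyond the (real) difficulty of controlling $O_p$ of the image of a block stabilizer in $GL(V_i)$, your induction has nothing to say in the \emph{primitive} case: when $V$ is a single homogeneous component for $N=O_{p'}(H)$ (which you cannot rule out), the permutation action you plan to exploit is trivial and there are no proper stabilizers or smaller-dimensional constituents to induct on. Handling this case requires the structure theory of primitive $p$-soluble linear groups --- a normal subgroup of symplectic type, $e^2=|F/Z(F)|$, the action of $H/F(H)$ on $F/Z(F)$, and so on --- and that analysis is the actual substance of Wolf's paper. So either you end by invoking the known bound on $p$-soluble completely reducible linear groups, in which case your argument collapses to the paper's one-line citation with redundant preprocessing, or the proof has a genuine and substantial gap at exactly the point you yourself identify as the crux.
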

\begin{proof}[\bf{Proof}]
	By \cite[Corollary 1.9(iii)]{Wolf-1984}, the conclusion holds.
\end{proof}

\begin{lemma}[{\cite[Proposition 2.5]{Qian-2012}}]\label{Qian}
	Let $ N = W_{1} \times  \cdots \times W_{s}  $ be a normal  subgroup of $ G $ with $ C_{G}(N) = 1 $, where every $ W_{i} $  $ (1\leq i\leq s) $ is a nonabelian simple group of order divisible by $ p $. Then $ |G/N|_{p} < |N|_{p}  $.
\end{lemma}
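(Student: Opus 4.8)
The plan is to use the hypothesis $C_G(N)=1$ to embed $G/N$ into the outer automorphism group of $N$ and then to bound the $p$-part of that group purely structurally. Since each $W_i$ is nonabelian simple, $N$ is centreless, so $Z(N)=\prod_i Z(W_i)=1$ and the conjugation action identifies $N$ with $\operatorname{Inn}(N)$. The kernel of the conjugation homomorphism $G\to\operatorname{Aut}(N)$ is $C_G(N)=1$, so $G\hookrightarrow\operatorname{Aut}(N)$ with $N$ carried to $\operatorname{Inn}(N)\cong N$; hence $G/N\hookrightarrow\operatorname{Out}(N)$ and in particular $|G/N|_p\le|\operatorname{Out}(N)|_p$. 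It therefore suffices to prove the sharper structural inequality $|\operatorname{Out}(N)|_p<|N|_p$, after which $|G/N|_p\le|\operatorname{Out}(N)|_p<|N|_p$ gives the claim.

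Next I would make $\operatorname{Out}(N)$ explicit. Grouping the factors $W_1,\dots,W_s$ by isomorphism type, say type $T$ is represented by a nonabelian simple group $S_T$ with multiplicity $m_T$, the standard description of the automorphism group of a direct power of a nonabelian simple group yields $\operatorname{Aut}(N)\cong\prod_T\bigl(\operatorname{Aut}(S_T)\wr\operatorname{Sym}(m_T)\bigr)$, and dividing out $\operatorname{Inn}(N)$ gives $\operatorname{Out}(N)\cong\prod_T\bigl(\operatorname{Out}(S_T)\wr\operatorname{Sym}(m_T)\bigr)$. Passing to $p$-parts, $|\operatorname{Out}(N)|_p=\prod_T|\operatorname{Out}(S_T)|_p^{\,m_T}\,|m_T!|_p$, whereas $|N|_p=\prod_T|S_T|_p^{\,m_T}$. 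Writing $\nu_p$ for the $p$-adic valuation and setting $a_T=\nu_p(|S_T|)\ge1$ (the hypothesis $p\mid|W_i|$ forces $a_T\ge 1$) and $b_T=\nu_p(|\operatorname{Out}(S_T)|)$, the target inequality reduces, type by type, to $m_T b_T+\nu_p(m_T!)<m_T a_T$. Here I would invoke the elementary bound $\nu_p(m!)<m$ for every $m\ge1$: indeed $\nu_p(m!)=\sum_{i\ge1}\lfloor m/p^i\rfloor<m/(p-1)\le m$, the borderline prime $p=2$ being settled by $\nu_2(m!)=m-s_2(m)\le m-1$, with $s_2(m)$ the sum of the binary digits of $m$. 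Consequently, if for each type one knows $b_T\le a_T-1$, then $m_T b_T+\nu_p(m_T!)\le m_T(a_T-1)+\nu_p(m_T!)<m_T(a_T-1)+m_T=m_T a_T$, so every factor contributes a strict inequality and multiplying gives $|\operatorname{Out}(N)|_p<|N|_p$.

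The whole argument thus reduces to a single inequality, which I expect to be the real obstacle: for every nonabelian simple group $S$ with $p\mid|S|$ one needs $\nu_p(|\operatorname{Out}(S)|)<\nu_p(|S|)$, that is, $|\operatorname{Out}(S)|_p<|S|_p$. This seems to require the classification of finite simple groups, and I would verify it by running through the list. For alternating groups $|\operatorname{Out}(A_n)|\le4$ while $|A_n|_p$ already exceeds it — here one uses the classification-free remark that a nonabelian simple group has $|S|_2\ge4$, since a Sylow $2$-subgroup of order $2$ is cyclic and forces a normal $2$-complement by Burnside, contradicting simplicity. For sporadic groups $|\operatorname{Out}(S)|\le2$ and the same comparison applies. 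The substantial case is the groups of Lie type, where I would compare the standard factorisation $|\operatorname{Out}(S)|=d\cdot f\cdot g$ (diagonal, field and graph contributions) against the much larger $p$-part of $|S|$, treating the defining-characteristic and cross-characteristic cases separately; for instance in defining characteristic the field part contributes only $\nu_p(f)<f=\nu_p(|S|_p)$. This uniform inequality is precisely the heart of the cited \cite[Proposition 2.5]{Qian-2012}, so in practice I would either carry out this case analysis or invoke it directly, and then assemble the conclusion through the valuation computation of the previous paragraph.
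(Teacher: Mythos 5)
Your reduction is sound, and it is worth noting that the paper itself offers no internal proof of this lemma: it is quoted verbatim from \cite[Proposition 2.5]{Qian-2012}, so the honest comparison is with Qian's argument, which your proposal essentially reconstructs. The chain $C_G(N)=1 \Rightarrow G\hookrightarrow \operatorname{Aut}(N)$ with $N$ mapped onto $\operatorname{Inn}(N)\cong N$ (using $Z(N)=1$), hence $G/N\hookrightarrow \operatorname{Out}(N)$; the identification $\operatorname{Out}(N)\cong \prod_T \bigl(\operatorname{Out}(S_T)\wr \operatorname{Sym}(m_T)\bigr)$ (valid because the $W_i$ are exactly the components of $N$, so automorphisms permute them within isomorphism types, and $\operatorname{Inn}(S_T)^{m_T}$ is normal in the wreath product); and the valuation bookkeeping $m_T b_T+\nu_p(m_T!)<m_T a_T$ obtained from $\nu_p(m!)\leq m-1$ together with $b_T\leq a_T-1$ are all correct, including the strictness of the final inequality (there is at least one type, and each type contributes strictly).

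The one place where your write-up falls short of a self-contained proof is exactly where you flag it: the inequality $|\operatorname{Out}(S)|_p<|S|_p$ for every nonabelian simple $S$ with $p\mid |S|$. Two caveats here. First, invoking \cite[Proposition 2.5]{Qian-2012} for this inequality would be circular as a standalone argument: taking $s=1$, $G=\operatorname{Aut}(S)$, $N=\operatorname{Inn}(S)$ (note $C_{\operatorname{Aut}(S)}(\operatorname{Inn}(S))=1$) shows the inequality \emph{is} the base case of the very proposition being proved; it is fine as attribution, and your reduction then proves the general case from this $s=1$ instance, which is still genuine content. Second, your Lie-type sketch covers only the easy half. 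In defining characteristic one indeed has $\nu_p(d)=0$, $\nu_p(f)\leq \log_p f<fN$, and the graph part contributes at most one factor of $p$; but the cross-characteristic case is where the inequality is genuinely tight. For instance, $S=\mathrm{PSL}_2(2^{3^k})$ with $p=3$ has $\nu_3(|\operatorname{Out}(S)|)=\nu_3(2\cdot 3^k)=k$, while lifting the exponent gives $\nu_3(|S|)=\nu_3\bigl(4^{3^k}-1\bigr)=1+k$: the margin is a single factor of $p$, so a complete case analysis needs sharp $\nu_p(q^2-1)$-type estimates rather than crude bounds. With that CFSG-dependent base case granted --- as the paper grants the whole lemma by citation --- your argument is complete and correct.
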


\begin{lemma}\label{Also}
	Let $ N $ be a minimal normal subgroup of  $ G $ and $ K $ a subgroup of $ G $.  Assume that $ |N|=|K|=p $. If $ KN $ satisfies  $ \LL $-$ \Pi $-property in $ G $, then $ K $ also  satisfies  $ \LL $-$ \Pi $-property in $ G $.
\end{lemma}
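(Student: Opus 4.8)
The plan is to verify the $ \LL $-$ \Pi $-property of $ K $ straight from the definition. Since $ |K| = p $ we have $ K^{G} \neq 1 $, so $ K^{G} $ does possess maximal $ G $-invariant subgroups; fix one, say $ M $, so that $ M \unlhd G $ and $ K^{G}/M $ is a chief factor of $ G $. Writing $ \overline{G} = G/M $ and $ \overline{X} = XM/M $ for subgroups $ X $ of $ G $, I must show that $ |\overline{G} : N_{\overline{G}}(\overline{K})| $ is a $ \pi(\overline{K}) $-number. If $ K \leq M $ then $ \overline{K} = 1 $ and there is nothing to prove, so I may assume $ \overline{K} $ has order $ p $. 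The engine of the argument is the dichotomy supplied by the minimality of $ N $: since $ N \cap K^{G} \unlhd G $ and $ |N| = p $, either $ N \leq K^{G} $ or $ N \cap K^{G} = 1 $.

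Suppose first that $ N \leq K^{G} $. Then $ (KN)^{G} = K^{G}N = K^{G} $, so the very same $ M $ is a maximal $ G $-invariant subgroup of $ (KN)^{G} $. If $ N \leq M $, then $ \overline{KN} = \overline{K} $, and the $ \LL $-$ \Pi $-property of $ KN $ applied to $ M $ is literally the conclusion I want. If $ N \nleq M $, then $ \overline{N} $ is a nontrivial normal subgroup of $ \overline{G} $ lying inside the chief factor $ K^{G}/M $, which forces $ K^{G}/M = \overline{N} $ to have order $ p $; then $ \overline{K} = K^{G}/M $ is normal in $ \overline{G} $ and the normalizer index is $ 1 $.

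Suppose instead that $ N \cap K^{G} = 1 $. Here $ \overline{N} = NM/M $ has order $ p $ and, by the modular law, $ \overline{N} \cap (K^{G}/M) = 1 $. The right auxiliary chief factor now comes from $ MN $: since $ M \leq K^{G} $, the modular law gives $ K^{G} \cap MN = M $, so $ (KN)^{G}/MN = K^{G}N/MN \cong K^{G}/M $ is a chief factor and $ MN $ is a maximal $ G $-invariant subgroup of $ (KN)^{G} $. Applying the $ \LL $-$ \Pi $-property of $ KN $ to $ MN $ and passing through the isomorphism $ \overline{G}/\overline{N} \cong G/MN $ yields that $ |\overline{G} : N_{\overline{G}}(\overline{K}\,\overline{N})| $ is a $ p $-number (note $ \overline{K}\,\overline{N}/\overline{N} \cong \overline{K} $, since $ \overline{K} \cap \overline{N} = 1 $).

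It then remains to replace $ \overline{K}\,\overline{N} $ by $ \overline{K} $ inside the normalizer, and I expect this to be the crux. Because $ \overline{N} \unlhd \overline{G} $, the inclusion $ N_{\overline{G}}(\overline{K}) \leq N_{\overline{G}}(\overline{K}\,\overline{N}) $ is automatic. For the reverse, the key observation is that $ \overline{K} $ is recovered intrinsically as $ \overline{K} = (K^{G}/M) \cap \overline{K}\,\overline{N} $ — this follows from the modular law together with $ \overline{K} \leq K^{G}/M $ and $ \overline{N} \cap (K^{G}/M) = 1 $. Since $ K^{G}/M \unlhd \overline{G} $, any element normalizing $ \overline{K}\,\overline{N} $ preserves both $ K^{G}/M $ and $ \overline{K}\,\overline{N} $, hence preserves their intersection $ \overline{K} $; thus $ N_{\overline{G}}(\overline{K}\,\overline{N}) \leq N_{\overline{G}}(\overline{K}) $. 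Equality of the two normalizers transfers the $ p $-number conclusion from $ \overline{K}\,\overline{N} $ to $ \overline{K} $, finishing the proof. The only delicate point is guaranteeing that $ \overline{K} $ sits inside the chief factor $ K^{G}/M $ so that this intersection trick applies; the remainder is bookkeeping with normal closures and the modular law.
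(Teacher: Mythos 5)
Your proof is correct and takes essentially the same route as the paper's: both arguments fix a maximal $G$-invariant subgroup $M$ (the paper's $A$) of $K^{G}$, observe that $M$ or $MN$ is then a maximal $G$-invariant subgroup of $(KN)^{G}=K^{G}N$, apply the hypothesis on $KN$ there, and finally recover $K$'s image as the intersection of $KNM$ with the normal subgroup $K^{G}$ (the paper's identity $KA=KNA\cap K^{G}$ is exactly your $\overline{K}=(K^{G}/M)\cap\overline{K}\,\overline{N}$), so that normalizing the product forces normalizing $K$'s image. The only cosmetic difference is that you organize the argument into cases and derive the intersection identity from the Dedekind modular law, whereas the paper obtains it uniformly by an order-counting argument.
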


\begin{proof}[\bf{Proof}]
	If $ K = N $, there is nothing to prove. Hence we may assume that $ |KN|=p^{2} $. Let $ K^{G}/A $ be an arbitrary chief factor of $ G $. Assume that $ (KNA/A)\cap (K^{G}/A) $ has order $ p^{2} $. Then $ KN\cap A=1 $ and $ KNA\leq K^{G} $. But $ K^{G}=NA $ and $ |(KNA/A)\cap (K^{G}/A )| = |(KNA/A ) \cap (NA/A)| = | NA/A| = p $. This contradicts our  assumption. Hence we have $ |(KNA/A)\cap (K^{G}/A)|\leq p $. If $ |KA/A|= 1 $, then $ K\leq A $. This is impossible. Thus we can assume that $ |KA/A| = p = |(KNA/A)\cap (K^{G}/A)| $. Therefore $ KA=KNA\cap K^{G} $.  Note that  $ A $ or $ NA $ is a maximal $ G $-invariant subgroup  of $ K^{G}N $. Since $ KN $ satisfies  $ \LL $-$ \Pi $-property in $ G $, we have that  $ |G:N_{G}(KNA)| $ is a $ p $-number, and so $ |G:N_{G}(KA)|=|G:N_{G}(KNA\cap K^{G})| $  is a $ p $-number. Hence $ K $   satisfies  $ \LL $-$ \Pi $-property in $ G $.
\end{proof}

\begin{lemma}\label{invariant}
	Let $ P\in {\rm Syl}_{p}(G) $, $ d $ be a power of $ p $ with $ p\leq d\leq |P| $, and let $ N $ be a minimal normal subgroup of $ G $ with $ d\big | |N| $. Assume  that every subgroup  of $ P $ of order $ d $ satisfies  $ \LL $-$ \Pi $-property in $ G $. Then $ |N| = d  $; and if in addition
	$ d \geq  p^{2} $, then $ N $ is the unique minimal normal subgroup of $ G $ of order divisible by $ p $.
\end{lemma}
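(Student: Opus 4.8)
The plan is to force, in each part, a suitable $p$-subgroup sitting inside a minimal normal subgroup to be $G$-invariant, and then to invoke minimality of that normal subgroup. The engine is an orbit-counting argument modulo $p$, fed by the $\LL$-$\Pi$-property.

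First I would record the basic consequence of the hypothesis. Put $P_{N}=P\cap N\in{\rm Syl}_{p}(N)$; since $d\mid|N|$ we have $d\leq|P_{N}|$, so $P_{N}$ (and hence $P$) contains subgroups of order $d$. If $H\leq P_{N}$ has order $d$, then $H\neq 1$ and $H^{G}$ is a nontrivial normal subgroup of $G$ contained in $N$, so minimality forces $H^{G}=N$; consequently the only $G$-invariant subgroups of $H^{G}=N$ are $1$ and $N$, and $1$ is the unique maximal one. Feeding the single chief factor $N/1$ into the definition of the $\LL$-$\Pi$-property shows that $|G:N_{G}(H)|$ is a $\pi(H)=\{p\}$-number, i.e. a power of $p$. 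Now every subgroup of $N$ of order $d=p^{k}$ is a $p$-subgroup and is therefore $N$-conjugate into $P_{N}$; hence every $G$-conjugacy class of order-$d$ subgroups of $N$ has a representative lying in $P_{N}\leq P$, and so has $p$-power length.

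For the first assertion, let $\Omega$ be the set of all subgroups of $N$ of order $d$, on which $G$ acts by conjugation. By the previous paragraph every $G$-orbit on $\Omega$ has $p$-power size, so the number of $G$-fixed points of $\Omega$ is congruent to $|\Omega|$ modulo $p$. By the classical theorem of Frobenius on the number of subgroups of a given prime-power order in a finite group, $|\Omega|\equiv 1\pmod p$; hence $G$ fixes at least one $H_{0}\in\Omega$, that is, $H_{0}\unlhd G$, $H_{0}\leq N$ and $|H_{0}|=d>1$. Minimality of $N$ yields $H_{0}=N$, whence $|N|=d$; in particular $N$ is an elementary abelian $p$-group. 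For the second assertion I would assume $d=p^{k}\geq p^{2}$ and suppose, for contradiction, that $G$ has a minimal normal subgroup $N^{*}\neq N$ with $p\mid|N^{*}|$. Then $N\cap N^{*}=1$, $N\leq O_{p}(G)\leq P$, and $Q:=P\cap N^{*}\in{\rm Syl}_{p}(N^{*})$ is nontrivial; fix $C\leq Q$ with $|C|=p$. For any $H_{1}\leq N$ of order $p^{k-1}$ set $H:=H_{1}C\leq P$; since $H_{1}\cap C\leq N\cap N^{*}=1$ we get $|H|=d$, and $H^{G}=N\times N^{*}$ because $H_{1}^{G}=N$ and $C^{G}=N^{*}$. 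Taking the maximal $G$-invariant subgroup $K=N^{*}$ of $H^{G}$, for which $H^{G}/K\cong N$ is a chief factor, the $\LL$-$\Pi$-property gives that $|G/N^{*}:N_{G/N^{*}}(HN^{*}/N^{*})|$ is a power of $p$; and $HN^{*}/N^{*}=H_{1}N^{*}/N^{*}$ has order $p^{k-1}$ inside $\overline{N}:=NN^{*}/N^{*}\cong N$. Letting $H_{1}$ range over all subgroups of $N$ of order $p^{k-1}$, its images range over all subgroups of $\overline{N}$ of that order, so every such subgroup has $p$-power index normalizer in $\overline{G}:=G/N^{*}$. Re-running the counting argument inside the irreducible $\overline{G}$-module $\overline{N}$ produces a $\overline{G}$-invariant subgroup of $\overline{N}$ of order $p^{k-1}$, which is proper and nontrivial since $1<p^{k-1}<|\overline{N}|$ (here $d\geq p^{2}$ is used); this contradicts the minimality of $\overline{N}$. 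Hence $N$ is the unique minimal normal subgroup of order divisible by $p$.

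The main obstacle, in my view, is conceptual rather than computational: it is the realization that the $\LL$-$\Pi$-property, which a priori only constrains subgroups lying in the fixed Sylow subgroup $P$, can be promoted to control \emph{all} order-$d$ subgroups of $N$ through $N$-conjugacy, so that the clean mod-$p$ orbit count (via Frobenius' subgroup-counting theorem) applies and directly manufactures a normal subgroup. The second delicate point is the design of the ``diagonal'' subgroup $H=H_{1}C$ together with the choice of the maximal $G$-invariant subgroup $K=N^{*}$, engineered precisely so that the surviving information is that \emph{every} hyperplane of $\overline{N}$ has $p$-power index normalizer, which lets the very same counting argument close the uniqueness statement.
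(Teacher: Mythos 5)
Your proof is correct, and it reaches both conclusions by a mechanism genuinely different from the paper's. The structural skeleton is parallel — in particular, your ``diagonal'' subgroup $H=H_{1}C$ with $|H\cap N|=d/p$, $|H\cap N^{*}|=p$, and the choice $K=N^{*}$ is exactly the paper's construction ($U\leq P\cap(T\times N)$ with $|U\cap N|=d/p$, $|U\cap T|=p$, read against the chief factor $TN/T$). The difference is how ``$|G:N_{G}(\cdot)|$ is a $p$-number'' gets promoted to actual normality. The paper selects a \emph{single} subgroup that is normal in the fixed Sylow subgroup $P$ (possible since $P\cap N\unlhd P$, resp.\ by taking $C\leq Z(P)\cap T$ and $D\unlhd P$ inside $N$); then $P\leq N_{G}(H)$, so the index $|G:N_{G}(H)|$ is simultaneously a $p$-number and a $p'$-number, hence $1$, and normality in $G$ follows at once. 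You instead let the subgroup range over \emph{all} order-$d$ subgroups of $N$ (resp.\ all hyperplane-type subgroups $H_{1}C$), observe via $N$-conjugacy into $P\cap N$ that every $G$-orbit has $p$-power length, and invoke Frobenius' theorem that the number of subgroups of order $p^{a}$ is $\equiv 1\pmod p$ to force a singleton orbit. Both routes are sound; the paper's is more elementary and self-contained (no counting theorem needed), while yours buys a genuinely stronger observation — the $\LL$-$\Pi$-hypothesis controls all order-$d$ subgroups of $N$, not just those normal in $P$, so normality can be manufactured purely by counting — at the cost of importing a classical theorem. One small point worth making explicit in your write-up: the surjectivity of $H_{1}\mapsto H_{1}N^{*}/N^{*}$ onto the order-$p^{k-1}$ subgroups of $\overline{N}$ uses Dedekind's law ($S=(S\cap N)N^{*}$ for $N^{*}\leq S\leq NN^{*}$), and the fact that $\overline{N}$ is minimal normal in $\overline{G}$ uses $N\cap N^{*}=1$; both are true and routine, but they are doing real work in your final contradiction.
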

\begin{proof}[\bf{Proof}]
	Let $ H $ be a normal  subgroup of $ P $ of order $ d $ such that $ H\leq N $.  Since $ H $ satisfies  $ \LL $-$ \Pi $-property in $ G $ and $ H^{G} = N $, we have  that $ | G  : N _{G} (H)| $ is a $ p $-number. Thus $ H=N $ has order $ d $.
	
	Assume that $ d \geq p^{2} $ and $ G $ possesses a minimal normal subgroup $ T $ with $ p \big | |T| $ and  $ T \not = N $. Let  $ U $ be a normal  subgroup of $ P $ contained in  $ P\cap (T\times N) $ with  order $ d $ such that $ |U\cap T|=p $ and $ |U\cap N|=d/p $. Clearly, $ U^{G}/T=NT/T $. By hypothesis,  $ U $ satisfies  $ \LL $-$ \Pi $-property in $ G $. Hence $ |G:N_{G}(UT)| $ is a $ p $-number. Thus $ UT\unlhd G $,  contradicting the minimality of $ TN/T $ as a normal subgroup of $ G/T $.
\end{proof}

\begin{lemma}\label{p-group}
	Let $ H $ be a nonidentity $ p $-subgroup of $ G $ and $ N $ a mininal normal subgroup of $ G $. If $ H $ satisfies $ \LL $-$ \Pi $-property in $ G $ and $ H\leq N $,  then $ N $ is a $ p $-group.
\end{lemma}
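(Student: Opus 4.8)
The plan is to leverage the minimality of $N$ to pin down exactly which chief factor the $\LL$-$\Pi$-property must be tested against, and then to run a short Sylow counting argument. First, since $H\neq 1$ and $H\le N$ with $N\unlhd G$, the normal closure $H^{G}$ is a nontrivial $G$-invariant subgroup of $N$, so minimality of $N$ forces $H^{G}=N$. As $N$ is a minimal normal subgroup, its only proper $G$-invariant subgroup is the trivial one; hence $K=1$ is the unique maximal $G$-invariant subgroup of $H^{G}$. Applying the $\LL$-$\Pi$-property of $H$ to this single chief factor $H^{G}/1=N$ yields that $|G:N_{G}(H)|$ is a $\pi(H)$-number, and since $H$ is a $p$-group this says precisely that $|G:N_{G}(H)|$ is a power of $p$.

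The heart of the argument is then a transitivity observation. Because $H\unlhd N_{G}(H)$, the $p$-group $H$ lies inside every Sylow $p$-subgroup of $N_{G}(H)$; so I would choose a Sylow $p$-subgroup $S$ of $N_{G}(H)$ and a Sylow $p$-subgroup $P$ of $G$ with $H\le S\le P$. Since $|G:N_{G}(H)|$ is a power of $p$, a routine order computation gives $S=P\cap N_{G}(H)\in{\rm Syl}_{p}(N_{G}(H))$ and $G=N_{G}(H)\,P$. Consequently every $G$-conjugate of $H$ is already a $P$-conjugate: writing $g=nx$ with $n\in N_{G}(H)$ and $x\in P$, one has $H^{g}=(H^{n})^{x}=H^{x}$. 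Now $H\le P\cap N$, and for $x\in P$ we have $(P\cap N)^{x}=P^{x}\cap N^{x}=P\cap N$, because $x$ normalizes $P$ and $N$ is normal in $G$; hence every conjugate $H^{x}$ lies in the fixed subgroup $P\cap N$. Since these conjugates generate $H^{G}=N$, I conclude $N=\langle H^{x}:x\in P\rangle\le P\cap N$, so $N\le P$ and $N$ is a $p$-group.

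The only point requiring genuine care — and the step I expect to be the real crux — is the combination $G=N_{G}(H)\,P$ together with the fact that all $P$-conjugates of $H$ remain inside the single subgroup $P\cap N$; everything else is formal. It is worth noting that once $|G:N_{G}(H)|$ is known to be a $p$-power, the remainder is purely Sylow-theoretic: it needs no appeal to the classification nor to the description of $N$ as a direct product of simple groups, so heavier tools such as Lemma~\ref{Qian} are not needed for this particular statement.
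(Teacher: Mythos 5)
Your proof is correct and follows essentially the same route as the paper: identify $H^{G}=N$, apply the $\LL$-$\Pi$-property to the unique chief factor $N/1$ to conclude that $|G:N_{G}(H)|$ is a $p$-number, and deduce $G=N_{G}(H)P$ for a Sylow $p$-subgroup $P$ containing $H$. The only difference is that where the paper invokes \cite[Lemma 3.4.9]{Guo-2000} to get $H\leq P_{G}$ (hence $N=H^{G}\leq O_{p}(G)$), you prove that step inline, observing that every $G$-conjugate of $H$ is a $P$-conjugate and therefore lies in $P$, so $N=H^{G}\leq P$.
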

\begin{proof}[\bf{Proof}]
	Clearly, $ H^{G}=N $. By hypothesis, $ H $ satisfies $ \LL $-$ \Pi $-property in $ G $. Hence  $ |G:N_{G}(H)| $ is a $ p $-number. This implies
	that $ G = N_{G}(H)P  $, where $ P $ is a Sylow $ p $-subgroup of $ G $ containing $ H $. By \cite[Lemma 3.4.9]{Guo-2000}, it follows that $ H\leq P_{G} $. Thus $ N=H^{G}\leq O_{p}(G) $.
\end{proof}

\section{Proofs}
\begin{theorem}\label{small}
	Let $ P $ be a normal $ p $-subgroup of $ G $. Then $ P\leq Z_{\UU}(G) $ if $ P $ satisfies the following:
	
	\begin{enumerate}[\rm(1)]
		\item Every  subgroup  of $ P $ of order $ p $ satisfies $ \LL $-$ \Pi $-property in $ G $;
		
		\item If  $ P $ is not quaternion-free, we  further suppose that every  cyclic subgroup of $ P $ of order $ 4 $ satisfies $ \LL $-$ \Pi $-property in $ G $.
		
	\end{enumerate}

\end{theorem}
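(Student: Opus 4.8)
The plan is to argue by induction on $|G|$, with a secondary induction on $|P|$ reserved for the quaternion-free $2$-case, and to feed the outcome into Lemma~\ref{hypercenter}. After disposing of $P=1$, I would pick a minimal normal subgroup $N$ of $G$ with $N\le P$; being contained in the $p$-group $P$, such an $N$ is elementary abelian. The first real step is to prove $|N|=p$. Choosing a Sylow $p$-subgroup $S\ge P$ and a subgroup $H\le N\cap \Z(S)$ of order $p$ (so that $H\unlhd S$ and hence $S\le N_{G}(H)$), the hypothesis that $H$ satisfies $\LL$-$\Pi$-property, together with $H^{G}=N$ and the fact that $1$ is the only maximal $G$-invariant subgroup of $N$, forces $|G:N_{G}(H)|$ to be simultaneously a $p$-number and prime to $p$, hence trivial. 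Thus $H\unlhd G$, and minimality gives $H=N$; this is the mechanism of Lemma~\ref{invariant} with $d=p$, and it yields $N\le Z_{\UU}(G)$ since $|N|=p$.

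The natural continuation is to pass to $\overline G=G/N$ with the normal $p$-subgroup $P/N$, apply the inductive hypothesis to obtain $P/N\le Z_{\UU}(\overline G)$, and combine this with $N\le Z_{\UU}(G)$ via the standard fact that hypercentrality is detected along a chief series through $N$, giving $P\le Z_{\UU}(G)$. The difficulty is checking that $(\overline G, P/N)$ again satisfies the hypotheses. An order-$p$ subgroup of $P/N$ is $L/N$ with $N\le L\le P$ and $|L|=p^{2}$, and by Lemma~\ref{ove}(3) it satisfies $\LL$-$\Pi$-property in $\overline G$ precisely when $L$ does in $G$. If $L$ splits over $N$, say $L=HN$ with $|H|=p$, then $H$ satisfies $\LL$-$\Pi$-property by hypothesis and Lemma~\ref{ove}(1) upgrades this to $L=HN$; so the sole obstruction is a \emph{cyclic} $L$ of order $p^{2}$, which has no order-$p$ complement and is covered by no hypothesis.

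To remove this obstruction I would first reduce the exponent. Let $D$ be a Thompson critical subgroup of $P$; by Lemma~\ref{hypercenter} it suffices to prove $\Omega(D)\le Z_{\UU}(G)$, and $\Omega(D)\unlhd G$ inherits the hypotheses, since its subgroups of order $p$ (and its cyclic subgroups of order $4$, when $P$ is not quaternion-free) are among those of $P$. By Lemma~\ref{critical}, $\Omega(D)$ has exponent $p$ when $p$ is odd and exponent at most $4$ when $p=2$. For odd $p$ this is decisive: every subgroup of order $p^{2}$ of $\Omega(D)$ is elementary abelian, so the cyclic case never arises and the $G/N$ induction runs verbatim with $\Omega(D)$ in place of $P$. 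For $p=2$ with $P$ not quaternion-free, a non-split $L$ of order $4$ is cyclic and hence directly covered by hypothesis (2); and a cyclic order-$4$ subgroup of $\Omega(D)/N$ lifts to an abelian group $L$ of order $8$ (here $N\le\Z(G)$, so $L/N$ cyclic forces $L$ abelian) of exponent at most $4$, which therefore splits off a $C_{4}$, so hypothesis (2) again applies after Lemma~\ref{ove}.

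The main obstacle, where I expect to spend the most care, is the remaining case $p=2$ with $P$ quaternion-free: hypothesis (2) is then unavailable, yet $\Omega_{1}(D)$ can still contain cyclic subgroups of order $4$ (for instance inside a dihedral section). Here I would split on whether $\Omega(D)=\Omega_{1}(D)$ has exponent $2$. If it does, every relevant subgroup of order $4$ is elementary abelian and the induction closes as above; if it does not, then $\Omega_{1}(D)$ is a non-abelian quaternion-free $2$-group and Lemma~\ref{charcteristic} supplies a characteristic, hence $G$-invariant, subgroup $M$ of index $2$. As $M$ still satisfies the hypotheses and $|M|<|P|$, the secondary induction gives $M\le Z_{\UU}(G)$; since $\Omega_{1}(D)/M$ is then a $G$-chief factor of order $2$, it follows that $\Omega_{1}(D)\le Z_{\UU}(G)$, and Lemma~\ref{hypercenter} delivers $P\le Z_{\UU}(G)$. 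Throughout, the technical heart is the bookkeeping that each reduction preserves both hypotheses, and in particular the split/cyclic dichotomy for the order-$p^{2}$ subgroups that appear over $N$ in the quotient.
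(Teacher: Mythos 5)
Your proof is correct, and it reaches the conclusion by a genuinely different route than the paper's. The paper runs a minimal-counterexample argument entirely inside $G$, working from the \emph{top} of $P$: it shows $P$ has a unique maximal $G$-invariant subgroup $N$ with $N\le Z_{\UU}(G)$ and $P/N$ noncyclic (the analogue of your secondary induction on $|P|$), forces the exponent of $P$ itself to be $p$ or $4$ via the Thompson critical subgroup, Lemma~\ref{critical} and Ward's Lemma~\ref{charcteristic}, and then gets a contradiction by taking $H=\langle x\rangle$ for $x\in P\setminus N$ with $HN/N$ central in a Sylow $p$-subgroup modulo $N$: since $N$ is maximal $G$-invariant in $H^{G}=P$, the $\LL$-$\Pi$-property plus Sylow-normalizer trick forces $HN\unlhd G$, hence $P=HN$ and $P/N$ cyclic, contradicting Step~1. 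You work from the \emph{bottom} instead: the same $\LL$-$\Pi$/Sylow mechanism applied at the chief factor $N/1$ pins down $|N|=p$, and you then induct on $|G|$ through the quotient $G/N$. The price of quotienting is precisely the hypothesis-inheritance problem you isolate (cyclic lifts of order $p^{2}$ or $8$ over $N$, which neither hypothesis covers directly), a problem the paper's top-down argument never encounters because it never passes to $G/N$; you pay for it with the reduction to $\Omega(D)$, the $C_{4}\times C_{2}$ splitting analysis (valid since $N\le \Z(G)$ when $p=2$), and the Ward-based secondary induction for nonabelian quaternion-free $\Omega_{1}(D)$. Both proofs draw on the same toolbox (Lemmas~\ref{hypercenter}, \ref{critical}, \ref{charcteristic}, \ref{ove}), but your inheritance bookkeeping is essentially the technique the paper defers to Lemmas~\ref{little} and \ref{G/N} for the cases $d\ge p^{2}$, so your route trades the paper's short final contradiction for a more modular quotient induction; the paper's route is shorter exactly because keeping the chief factor at the top of $P$ sidesteps the quotient-inheritance question, which is the delicate point for the $\LL$-$\Pi$-property.
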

\begin{proof}[\bf{Proof}]
	Assume that this theorem  is not true and let $ ( G, P ) $ be a counterexample for which $ | G || P | $ is minimal. We proceed via the following steps.
	
	\vskip0.1in
	
	\textbf{Step 1.} $ P $ has a unique  maximal $ G $-invariant subgroup, say $ N $. Moreover, $ N\leq Z_{\UU}(G) $ and $ P/N $ is not cyclic.

	It is easy to see that $ (G, N) $ satisfies the hypotheses of the theorem. By the choice of $ (G, P) $, we have that $ N \leq Z_{\UU}(G) $. Now assume that $ T $ is a  maximal $ G $-invariant subgroup of $P$, which is different from $ N $. Then $ T\leq Z_{\UU}(G) $  with a similar argument as above. Thus $ P=TN\leq Z_{\UU}(G) $, a contradiction. This shows that $ N $ is the unique  maximal $ G $-invariant subgroup of $ P $. If $ P/N $ is cyclic, then $ P/N \leq Z_{\UU}(G) $,  and so $ P \leq
	Z_{\UU}(G) $, which is impossible. Hence $ P/N $ is noncyclic.
	
	
	\vskip0.1in
	
	\textbf{Step 2.} The exponent of $ P $ is $ p $ or $ 4 $ (when $ P $ is  not quaternion-free).

	Let $ D $ be a Thompson critical subgroup of $ P $.
	If $ \Omega(D)< P $, then $ \Omega(D)\leq N $ by the uniqueness of $ N $. In view of Step 1, we have  $ \Omega(D)\leq Z_{\UU}(G) $. Applying Lemma \ref{hypercenter}, we know that $ P\leq Z_{\UU}(G) $, a contradiction. Thus $ P = D = \Omega (D) $. If $ P $ is a nonabelian quaternion-free $ 2 $-group, then $ P $ has a characteristic subgroup $ K $ of index $ 2 $ by Lemma \ref{charcteristic}. In view of Step 1, we have $ K=N \leq Z_{\UU}(G) $, and hence  $ P\leq Z_{\UU}(G) $, which is impossible. This means that $ P $ is a nonabelian $ 2 $-group if and only if $ P $ is not quaternion-free. By  Lemma \ref{critical}, the exponent of $ P $ is $ p $ or $ 4 $ (when $ P $  is not quaternion-free), as wanted.
	
	\vskip0.1in
	
	\textbf{Step 3.} The final contradiction.

	Let $ G_{p} $ be a Sylow $ p $-subgroup of $ G $. Note that $ P/N \cap
	Z(G_{p}/N) > 1 $. Let $ X/N $ be a subgroup of $ P/N \cap
	Z(G_{p}/N) $ of order $ p $. Then we can choose an element $ x\in X-N $. Write $ H=\langle x \rangle $. Then  $ X=HN $ and $ H $ is a subgroup of order $ p $ or $ 4 $ (when $ P $ is not quaternion-free) by Step 2. The uniqueness of $ N $ implies $ H^{G}=P $.   By hypothesis, $ H $ satisfies $ \LL $-$ \Pi $-property in $ G $. Then $ |G:N_{G}(X)|=|G:N_{G}(HN)| $ is a $ p $-number, and thus $ X\unlhd G $. This forces that $ P/N=X/N=HN/N\cong H/(H\cap N) \lesssim \langle x \rangle $, which contradicts Step 1. This final contradiction completes the proof.
\end{proof}

\begin{theorem}\label{super}
	Let  $ N $ be a normal subgroup of  $ G $  and $ P\in {\rm Syl}_{p}(N) $. Then $ N\leq Z_{\UU_{p}}(G) $ if $ P $ satisfies the following:
	
	\begin{enumerate}[\rm(1)]
		\item Every  subgroup of $ P $ of order $ p $ satisfies $ \LL $-$ \Pi $-property in $ G $;
		
		\item If $ P $ is not quaternion-free, we  further suppose that every cyclic subgroup of $ P $ of order $ 4 $ satisfies $ \LL $-$ \Pi $-property in $ G $.
		
	\end{enumerate}	
	
	

\end{theorem}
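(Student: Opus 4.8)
The plan is to run a minimal-counterexample argument, using Lemma \ref{hyper} to reformulate the conclusion and Theorem \ref{small} to strip off a chief factor of order $p$, and then to induct down a chief series. Suppose the assertion fails and let $(G,N)$ be a counterexample with $|G||N|$ minimal; we may assume $p\mid|N|$, since otherwise $P=1$, there are no $p$-$G$-chief factors below $N$, and $N\le Z_{\UU_p}(G)$ trivially.

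First I would remove the $p'$-core. As every subgroup of $P$ of order $p$ and every cyclic subgroup of order $4$ has order prime to $|O_{p'}(G)|$, part (3) of Lemma \ref{ove} carries all of the hypotheses over to $\overline{G}=G/O_{p'}(G)$, in which $\overline{P}\cong P$ is a Sylow $p$-subgroup of $\overline{N}=NO_{p'}(G)/O_{p'}(G)$. Since $O_{p'}(G)$ contains no $p$-$G$-chief factor we have $O_{p'}(G)\le Z_{\UU_p}(G)$ and $Z_{\UU_p}(\overline{G})=Z_{\UU_p}(G)/O_{p'}(G)$; minimality would then give $\overline{N}\le Z_{\UU_p}(\overline{G})$ and hence $N\le Z_{\UU_p}(G)$, a contradiction. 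So $O_{p'}(G)=1$, whence every minimal normal subgroup of $G$ has order divisible by $p$.

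Next I would produce a chief factor of order $p$ at the bottom. Pick a minimal normal subgroup $L$ of $G$ with $L\le N$; then $p\mid|L|$. Choosing $H\le L$ with $|H|=p$ gives $H^{G}=L$, and since $H$ satisfies $\LL$-$\Pi$-property, Lemma \ref{p-group} shows that $L$ is a $p$-group, hence elementary abelian; in particular this is the step that rules out nonabelian minimal normal subgroups below $N$. Now $L$ is a normal $p$-subgroup of $G$, all of its subgroups of order $p$ lie in $P$ and so satisfy $\LL$-$\Pi$-property, and $L$ is quaternion-free because it is abelian; hence Theorem \ref{small} yields $L\le Z_{\UU}(G)$. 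As a $G$-chief factor contained in $Z_{\UU}(G)$, $L$ has prime order, so $|L|=p$, and being a normal subgroup of $G$ of order $p$ inside $P$ it lies in $Z(P)$.

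Finally I would pass to $G/L$. Since $|L|=p$ is a $p$-$G$-chief factor of order $p$ lying in $Z_{\UU_p}(G)$, a normal subgroup $M\ge L$ lies in $Z_{\UU_p}(G)$ if and only if $M/L\le Z_{\UU_p}(G/L)$; thus it is enough to prove $N/L\le Z_{\UU_p}(G/L)$, and by minimality this follows once the hypotheses are seen to hold for $(G/L,\,N/L)$ with Sylow $p$-subgroup $\overline{P}=P/L$. Verifying this transfer is where I expect the real work to lie. For a subgroup $\overline{X}=X/L$ of $\overline{P}$ of order $p$ (or a cyclic subgroup of order $4$, if $\overline{P}$ is not quaternion-free) the preimage satisfies $L\le X\le P$; when $X$ is noncyclic one may pick $X_{0}\le X$ of order $p$ (respectively cyclic of order $4$) with $X_{0}\cap L=1$ and $X_{0}L=X$, so that $\overline{X}=X_{0}L/L$ and part (2) of Lemma \ref{ove} transports the $\LL$-$\Pi$-property of $X_{0}$ to $\overline{X}$. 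The delicate case, and the main obstacle, is when $X$ is cyclic (of order $p^{2}$, or of order $4$ when $p=2$): here $L\le Z(P)$ forces $L$ to be the unique subgroup of $X$ of order $p$, so no order-$p$ lift of $\overline{X}$ exists and the hypotheses must be fed in differently. This is exactly the point at which hypothesis (2), the cyclic-subgroups-of-order-$4$ condition, together with a careful tracking of quaternion-freeness under the quotient, becomes indispensable; controlling these cyclic lifts is the crux, after which minimality delivers $N/L\le Z_{\UU_p}(G/L)$ and the desired contradiction.
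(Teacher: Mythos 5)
Your argument is sound up to the final reduction, but the step you yourself flag as ``the crux'' --- transferring the hypotheses to $(G/L,\,N/L)$ for cyclic preimages --- is a genuine gap, not a deferred technicality, and it cannot be closed inside your framework. If $X/L\le P/L$ has order $p$ and the preimage $X$ is cyclic of order $p^{2}$ (which happens whenever $P$ contains an element of order $p^{2}$ whose $p$-th power generates $L$), then $L$ is the unique subgroup of $X$ of order $p$, so no subgroup $X_{0}$ with $X_{0}L=X$ and $|X_{0}|=p$ exists; and for odd $p$ the theorem assumes nothing whatsoever about cyclic subgroups of order $p^{2}$, so there is no hypothesis available to feed into Lemma \ref{ove}(2). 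The same failure occurs when $p=2$ and $P$ is quaternion-free (no order-$4$ hypothesis is assumed then), and for cyclic preimages of order $8$: if $X\cong C_{8}$ with $L=\langle x^{4}\rangle$, the unique cyclic subgroup $\langle x^{2}\rangle$ of order $4$ contains $L$ and maps onto a subgroup of order $2$, never onto $X/L$. So the inductive hypothesis genuinely need not hold for $(G/L,\,N/L)$, and minimality cannot be invoked; your penultimate sentence asserts exactly what is missing.

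It is worth seeing how the paper avoids this obstruction: its proof never quotients by a bottom chief factor of order $p$. After reducing to $O_{p'}(N)=1$, it shows $N$ cannot be $p$-soluble (using $F_{p}^{*}(N)=O_{p}(N)$, Theorem \ref{small} applied to $O_{p}(N)$, and \cite[Lemma 2.13]{Su-2014}); then minimality and Lemma \ref{p-group} give a unique maximal $G$-invariant subgroup $U$ of $N$ with $U\le Z_{\UU_{p}}(G)$; then --- quotienting by $U$, not by a minimal normal subgroup --- it shows every cyclic subgroup of $P$ of order $p$ or $4$ lies in $U$, since otherwise Lemmas \ref{ove}(2) and \ref{p-group} would force $N/U$ to be a $p$-group and hence $N$ to be $p$-soluble, a contradiction. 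The finishing tool is Lemma \ref{hyper}, whose conditions require the cyclic subgroups to be \emph{contained in} $Z_{\UU_{p}}(G)$ rather than to satisfy an embedding property; containment passes trivially from $U$ to its subgroups, so no transfer of the $\LL$-$\Pi$-property to quotients of the problematic kind is ever needed. To complete your outline you would have to replace your last step by some such mechanism; the cyclic-lift transfer you hope for is not available in the generality required.
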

\begin{proof}[\bf{Proof}]
	Assume that this theorem is not true and let $ (G , N) $ be a minimal counterexample for which $ |G||N| $ is minimal.
	
	\vskip0.1in
	
	\textbf{Step 1.} $ O_{p'}(N)=1 $.

	Clearly the hypotheses hold for $ (G/O_{p'}(N), N/O_{p'}(N)) $ and thus the minimal choice of $ ( G , N ) $ implies that $ O_{p'}(N) = 1 $.
	
	\vskip0.1in
	
	\textbf{Step 2.} $ N $ is not $ p $-soluble.

	Assume that $ N $ is $ p $-soluble. Since $ O_{p'}(N)=1 $, it follows from \cite[Lemma 2.10]{ball-2009} that $ F^{*}_{p}(N) = F_{p}(N) =
	O_{p}(N) $.  By Theorem \ref{small}, we have $ F^{*}_{p}(N) =O_{p}(N) \leq Z_{\UU}(G) \leq Z_{\UU_{p}}(G) $. Hence $ N\leq Z_{\UU_{p}}(G) $ by \cite[Lemma 2.13]{Su-2014}, a contradiction.
	
	\vskip0.1in
	
	\textbf{Step 3.} $ N $ has a unique maximal $ G $-invariant subgroup $ U $. Moreover, $ 1<U\leq Z_{\UU_{p}}(G) $.

	Let  $ H $ be a cyclic subgroup of $ P $ of order $ p $ or $ 4 $ (when $ P $ is not quaternion-free).
	Assume that  $ N $ is a minimal normal subgroup of $ G $. Then $ H^{G}=N $. 	By Lemma \ref{p-group}, $ N $ is a $ p $-group, contrary to Step 2. Hence $ N $ has a maximal $ G $-invariant subgroup $ U\not =1 $. It is easy to see  that the hypotheses also hold for $ ( G , U ) $ and the minimal choice of $ ( G , N) $ yields that $ U\leq  Z_{\UU_{p}}(G) $. Assume that $ R $ is a maximal $ G $-invariant subgroup of $ N $, which is different from $ U $. Then $ R\leq Z_{\UU_{p}}(G) $, and thus $ N=UR\leq Z_{\UU_{p}}(G) $, a contradiction. Thus $ U $ is the unique maximal $ G $-invariant subgroup of $ N $.
	
	\vskip0.1in
	
	\textbf{Step 4.}  Every cyclic subgroup of $ P $ of order $ p $ or $ 4 $ (when $ P $ is not quaternion-free)  is contained in $ U $.

	Assume that there exists a cyclic subgroup $ L $ of $ P $ of order $ p $ or $ 4 $ (when $ P $ is not quaternion-free)  such that $ L\nleq U $.  By hypothesis, $ L $ satisfies $ \LL $-$ \Pi $-property in $ G $. Applying  Lemma \ref{ove}(2),  $ LU/U $ satisfies $ \LL $-$ \Pi $-property in $ G/U $. The uniqueness of  $ U $ yields that $ L^{G}=N $, and so $ (LU/U)^{G/U}=N/U $. By Lemma \ref{p-group}, it turns out that $ N / U $ is a $ p $-group.  By Step 3, we know that $ N $ is $ p $-soluble, contrary to Step 2.   Therefore, Step 4 holds.
	
	\vskip0.1in
	
	\textbf{Step 5.}  The final contradiction.

	Since $ U\leq Z_{\UU_{p}}(G) $, it follows from Step 4 and Lemma \ref{hyper} that $ N\leq Z_{\UU_{p}}(G) $. This  final contradiction completes the proof.
\end{proof}

\begin{lemma}\label{little}
	Let $ P\in {\rm Syl}_{p}(G) $ and $ d $ be a power of $ p $ with $ p^{2}\leq d<|P| $. Assume that every    subgroup of  $ P $ of order $ d $ satisfies  $ \LL $-$ \Pi $-property in $ G $ and
	$ G $ possesses a minimal normal  subgroup $ N $ with $ |N| = d/p $. Then $ G/N $ is $ p $-supersoluble.
\end{lemma}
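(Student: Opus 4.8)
The plan is to derive the $p$-supersolubility of $\overline{G}:=G/N$ from Theorem \ref{super}. Since $|N|=d/p$ is a power of $p$ and $N$ is minimal normal, $N$ is an elementary abelian $p$-group; in particular $N\le O_{p}(G)\le P$, so $\overline{P}=P/N$ is a Sylow $p$-subgroup of $\overline{G}$. Applying Theorem \ref{super} to the group $\overline{G}$ with $\overline{G}$ itself in the role of the normal subgroup would give $\overline{G}=Z_{\UU_{p}}(\overline{G})$, which is precisely $p$-supersolubility. So the whole task reduces to checking the two hypotheses of Theorem \ref{super} for $\overline{P}$ in $\overline{G}$.

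Condition (1) is immediate. If $\overline{H}\le\overline{P}$ has order $p$, its full preimage $H$ in $P$ satisfies $N\le H$ and $|H|=p|N|=d$, so $H$ satisfies $\LL$-$\Pi$-property in $G$ by hypothesis, and Lemma \ref{ove}(2) then shows $\overline{H}=HN/N$ satisfies $\LL$-$\Pi$-property in $\overline{G}$. Hence every subgroup of $\overline{P}$ of order $p$ satisfies $\LL$-$\Pi$-property in $\overline{G}$.

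Condition (2) --- needed only when $\overline{P}$ is not quaternion-free, so $p=2$ --- is the main obstacle. Given a cyclic $\overline{C}\le\overline{P}$ of order $4$, write $C$ for its preimage in $P$, so $N\le C$, $|C|=2d$ and $C/N=\overline{C}$. I would like a subgroup $H\le C$ of order $d$ with $HN=C$: then $HN/N=\overline{C}$, and since $|H|=d$ the hypothesis together with Lemma \ref{ove}(2) give that $\overline{C}$ satisfies $\LL$-$\Pi$-property in $\overline{G}$. Such an $H$ is exactly a maximal subgroup of $C$ avoiding $N$, and it exists precisely when $N\not\le\Phi(C)$, i.e. precisely when $C$ is non-cyclic. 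The one obstruction is therefore $C$ cyclic; but then $N$ is a subgroup of a cyclic group and hence cyclic, while being elementary abelian it must have order $p$. Thus the obstruction can occur only when $|N|=p$, equivalently $d=p^{2}$.

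It remains to treat the case $d=p^{2}$, $|N|=p$, where I abandon the quotient and work inside $G$. Here Lemma \ref{Also} applies: for any subgroup $K\le P$ of order $p$, the product $KN$ has order $p^{2}=d$ and so satisfies $\LL$-$\Pi$-property in $G$, whence $K$ does too; thus every subgroup of $P$ of order $p$ satisfies $\LL$-$\Pi$-property in $G$. Moreover, in the only relevant case $p=2$ every cyclic subgroup of $P$ of order $4$ has order $d=4$ and hence satisfies $\LL$-$\Pi$-property. Therefore Theorem \ref{super}, now applied to $G$ with $G$ as the normal subgroup, yields $G=Z_{\UU_{p}}(G)$, so $G$ --- and a fortiori $G/N$ --- is $p$-supersoluble. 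When instead $d>p^{2}$ we have $|N|>p$, so $N$ (elementary abelian of order $>p$) cannot lie in any cyclic group; every preimage $C$ above is then non-cyclic, the maximal-subgroup construction goes through, and Theorem \ref{super} applied to $\overline{G}$ finishes the proof. The crux is the structural observation that a cyclic preimage forces $|N|=p$, which is exactly the configuration in which Lemma \ref{Also} lets one sidestep the quotient altogether.
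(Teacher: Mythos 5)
Your proof is correct and takes essentially the same route as the paper's: both verify the hypotheses of Theorem \ref{super} for $G/N$ by lifting order-$p$ subgroups of $P/N$ to preimages of order $d$, both handle cyclic subgroups of order $4$ in $P/N$ via a maximal subgroup of order $d$ of a non-cyclic preimage (Lemma \ref{ove}(2)), and both resolve the cyclic-preimage obstruction (which forces $|N|=p$, $d=p^2$) by using Lemma \ref{Also} to apply Theorem \ref{super} in $G$ itself. The only differences are cosmetic: the paper organizes the dichotomy as $N\leq\Phi(X)$ versus $N\nleq\Phi(X)$ rather than $d=p^{2}$ versus $d>p^{2}$, and it makes a (not actually needed) preliminary reduction to $O_{p'}(G)=1$.
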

\begin{proof}[\bf{Proof}]
	By  Lemma \ref{ove}(2) and induction, we may assume that  $ O_{p'}(G) = 1 $. Let $ A/N $ be a subgroup of $ P/N $ of order $ p $. Then $ A $ has order $ d $. By hypothesis, $ A $ satisfies  $ \LL $-$ \Pi $-property in $ G $. Then $ A/N $ satisfies  $ \LL $-$ \Pi $-property in $ G/N $. Therefore,  every  subgroup  of $ P/N $ of order $ p $ satisfies  $ \LL $-$ \Pi $-property in $ G/N $.
	
	If $ p > 2 $, or $ p = 2 $ and $ P/N $  is quaternion-free, then by Theorem \ref{super},  $ G/N $ is $ p $-supersoluble, as desired. Hence  we may assume that $ p=2 $ and $ P/N $  is not quaternion-free.
	Let $ X/N $ be an arbitrary cyclic subgroup of $ P/N $ of order $ 4 $. Assume that $ N \leq \Phi(X) $. Then $ X $ is cyclic and thus $ |N| = 2 $ and $ d = 4 $. By hypothesis,  every subgroup of $ P $ of order $ 4 $ satisfies  $ \LL $-$ \Pi $-property in $ G $. Now we claim that  every subgroup $ Y $ of $ P $ of order
	$ 2 $ satisfies  $ \LL $-$ \Pi $-property in $ G $.  It is no loss to assume that $ N\not =Y $. Then $ YN $ has order $ 4 $. By  hypothesis,  $ YN $ satisfies  $ \LL $-$ \Pi $-property in $ G $. Applying Lemma  \ref{Also}, we deduce that $ Y $ satisfies $ \LL $-$ \Pi $-property in $ G $, as claimed. It follows from Theorem  \ref{super} that $ G $ is $ p $-supersoluble. Thus $ G/N $ is $ p $-supersoluble.
	
	Hence we may suppose that $ N \nleq  \Phi(X) $.  Then there exists a maximal subgroup $ X_{1} $ of $ X $ such that $ X = X_{1}N $. Notice that $ |X_{1}|=2|N|=d $. Then $ X_{1} $ satisfies $ \LL $-$ \Pi $-property in $ G $. By Lemma \ref{ove}(2), $ X/N = X_{1}N/N $ satisfies $ \LL $-$ \Pi $-property in $ G/N $. This means that every cyclic subgroup of $ P/N $ of order $ 4 $ satisfies $ \LL $-$ \Pi $-property in $ G/N $. Note that every subgroup of $ P/N $ of order $ p=2 $ satisfies  $ \LL $-$ \Pi $-property in $ G/N $.  According to Theorem \ref{super}, $ G/N $ is $ p $-supersoluble. Hence the proof of the lemma is complete.
\end{proof}

\begin{lemma}\label{G/N}
	Let $ P\in {\rm Syl}_{p}(G) $ and $ d $ be a power of $ p $ with $ p^{2}\leq d< |P| $. Assume that every    subgroup of  $ P $ of order $ d $ satisfies  $ \LL $-$ \Pi $-property in $ G $ and  $ G $ possesses a minimal normal subgroup $ N $ of order $ d $. Then  $ G/N $ is $ p $-supersoluble.
\end{lemma}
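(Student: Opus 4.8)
The plan is to imitate the proof of Lemma~\ref{little} and reduce the statement to Theorem~\ref{super}. Precisely, I will verify the two hypotheses of Theorem~\ref{super} for the quotient $\overline{G}=G/N$ relative to its Sylow $p$-subgroup $\overline{P}=P/N$; taking $\overline{G}$ itself as the normal subgroup there then yields $\overline{G}=Z_{\UU_{p}}(\overline{G})$, which is exactly the $p$-supersolubility of $G/N$. Throughout I use that $N$, being a minimal normal subgroup of $G$ of order $d=p^{a}$ with $a\geq 2$, is elementary abelian and hence noncyclic.

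First I treat condition~(1). Let $A/N\leq\overline{P}$ have order $p$, so that $|A|=dp$ and $N$ is a maximal subgroup of $A$. Since $N$ is noncyclic, $A$ is noncyclic and so has a maximal subgroup $B\neq N$; then $|B|=d$ and $BN=A$. By hypothesis $B$ satisfies $\LL$-$\Pi$-property in $G$, so $A/N=BN/N$ satisfies $\LL$-$\Pi$-property in $\overline{G}$ by Lemma~\ref{ove}(2). Hence every subgroup of $\overline{P}$ of order $p$ satisfies $\LL$-$\Pi$-property in $\overline{G}$, and if $p>2$, or $p=2$ and $\overline{P}$ is quaternion-free, Theorem~\ref{super} already gives the conclusion.

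It remains to handle $p=2$ with $\overline{P}$ not quaternion-free, where condition~(2) is also required: every cyclic $X/N\leq\overline{P}$ of order $4$ must satisfy $\LL$-$\Pi$-property in $\overline{G}$, with $|X|=4d$. The strategy is to build a subgroup $Y\leq X$ with $|Y|=d$ and $YN=X$; then $Y$ satisfies $\LL$-$\Pi$-property in $G$ by hypothesis, and $X/N=YN/N$ satisfies it in $\overline{G}$ by Lemma~\ref{ove}(2). Choose $x\in X$ projecting to a generator of $X/N$; as $N$ is elementary abelian and $x^{4}\in N$, the order of $x$ is $4$ or $8$. Since $\langle x\rangle$ is a $2$-group acting on the $\mathbb{F}_{2}$-space $N$, it leaves invariant some subspace $N_{0}\leq N$ of order $d/4$ with $\langle x\rangle\cap N\leq N_{0}$, and then $Y=\langle x\rangle N_{0}$ has order $d$ and satisfies $YN=X$. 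Such an $N_{0}$ exists whenever $d\geq 8$, and also when $d=4$ provided some generator of $X/N$ lifts to an element of order $4$ (so that $\langle x\rangle\cap N=1$ and $Y=\langle x\rangle\cong C_{4}$).

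The hard case, which I expect to be the heart of the proof, is $d=p^{2}=4$ with every lift of a generator of $X/N$ of order $8$; then $N\cong C_{2}^{2}$, $N=\Omega_{1}(X)$, and no element of order $4$ projects to a generator of $X/N$, so $N$ has no cyclic complement in $X$ and the construction above fails. One cannot instead apply Theorem~\ref{super} to $G$ itself, as is done in Lemma~\ref{little} when $|N|=p$: here $G$ acts irreducibly on $N$, the image of $G$ in $\operatorname{Aut}(N)\cong S_{3}$ contains an element of order $3$, and therefore every subgroup of $N$ of order $2$ has normalizer of index $3$ in the chief factor $N$ and fails $\LL$-$\Pi$-property in $G$. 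To deal with this case I would fix an order-$8$ lift $x$ and use the order-$4$ subgroups of $P$ granted by the hypothesis. From $(x^{2})^{2}=x^{4}\in N\setminus\{1\}$ and the minimality of $N$ one gets $N\leq\langle x^{2}\rangle^{G}$, and in fact $N$ lies in every maximal $G$-invariant subgroup $K$ of $\langle x^{2}\rangle^{G}$, since otherwise $\langle x^{2}\rangle^{G}/K\cong N$ would have exponent $2$ while containing the cyclic image $\langle x^{2}\rangle K/K\cong C_{4}$. Building on this, and on the coprimeness of the order-$3$ action on the relevant $2$-chief factors, I would verify $\LL$-$\Pi$-property for $X/N$ in $\overline{G}$ directly from the definition by examining the $G$-chief factors above $N$ inside $(X/N)^{\overline{G}}$, the goal being to show either that they behave as needed or that this configuration is simply incompatible with the hypothesis on the order-$d$ subgroups. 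Making this final analysis precise is the delicate step on which the argument turns.
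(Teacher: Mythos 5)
Your verification of condition (1) of Theorem \ref{super} for $G/N$ is correct and is exactly the paper's argument: $X$ noncyclic (because $N$ is noncyclic) has a maximal subgroup $X_{1}\neq N$ of order $d$ with $X_{1}N=X$, and Lemma \ref{ove}(2) applies. Your treatment of condition (2) via an $x$-invariant subspace $N_{0}$ (a $2$-group acts unipotently on an $\mathbb{F}_{2}$-space, hence stabilizes subspaces of every dimension over $\langle x\rangle\cap N$) is also correct and disposes of all cases with $d\geq 8$, as well as $d=4$ when some generator of $X/N$ lifts to an element of order $4$. However, the remaining case --- $d=4$, $N\cong C_{2}\times C_{2}$, and every lift of a generator of $X/N$ has order $8$ (so $X\cong C_{8}\times C_{2}$ or the modular group of order $16$) --- is a genuine gap, not a deferred routine check. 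In that case no subgroup of order $4$ of $X$ supplements $N$, so Lemma \ref{ove}(2) gives nothing about $X/N$; and, as you yourself observe, the order-$2$ subgroups of $N$ lie in a $G$-orbit of length $3$, so the fallback of Lemma \ref{little} (apply Theorem \ref{super} to $G$ itself via Lemma \ref{Also}) is also unavailable. Your proposed resolution --- ``examining the $G$-chief factors above $N$'' with the goal of showing either that the property holds or that the configuration is incompatible with the hypothesis --- is a statement of intent, not an argument; the one concrete observation you make ($N\leq K$ for every maximal $G$-invariant subgroup $K$ of $\langle x^{2}\rangle^{G}$) only recovers the $\LL$-$\Pi$-property of the order-$2$ subgroup $\langle x^{2}\rangle N/N$ of $X/N$, which condition (1) already gave. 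Since this is precisely where the difficulty of the lemma is concentrated, the proof is incomplete.

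It is worth noting that the paper never attempts to verify condition (2) of Theorem \ref{super} for $G/N$. Once $p=2$ and $P/N$ is not quaternion-free, it switches to a global contradiction argument: after reducing to $O_{2'}(G)=1$ and using Lemma \ref{invariant} to make $N$ the unique minimal normal subgroup, it takes $T\unlhd G$ minimal with $T/N\nleq Z_{\UU_{2}}(G/N)$, lets $U$ be the unique maximal $G$-invariant subgroup of $T$, exploits the $2$-nilpotency of $U/N$ and Lemma \ref{hyper} to produce a $2$-element $a_{0}\in T-U$ of minimal order (at most $8$), and then splits on whether $o(a_{0})>d$ or $o(a_{0})\leq d$. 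The first alternative is exactly your hard case (order $8$ over $|N|=d=4$), and it is eliminated not locally but by showing $N\leq Z(T)$, that $U$ is a $2$-group with $N\leq\Phi(U)$, and then $U\leq Z_{\UU_{2}}(G)$ by \cite[Lemma 2.10]{Su-2014}, contradicting $|N|=4$; the second alternative uses the hypothesis on a subgroup $H$ with $\langle a_{0}\rangle\leq H\leq\langle a_{0}\rangle N$ together with a Thompson critical subgroup analysis. So the missing step in your proposal is not a detail one could fill in the same local style: the paper's resolution of that configuration requires hypercentral machinery applied to a whole normal section, which your chief-factor-by-chief-factor approach does not supply.
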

\begin{proof}[\bf{Proof}]
	Suppose that the result  is false and $ G $ is a counterexample of minimal order.
	By Lemma \ref{ove}(2), the hypotheses are inherited by  $ G/O_{p'}(G) $, so we can assume that $ O_{p'}(G) = 1 $. By Lemma \ref{invariant},  $ N $ is the unique normal subgroup of $ G $.
	
	At first, we will show that every subgroup of $ P/N $ of order $ p $ satisfies  $ \LL $-$ \Pi $-property in $ G/N $.  Let $ X/N $ be an arbitrary  subgroup of $ P/N $ of order $ p $. Then $ X $ is not a cyclic subgroup
	since $ N $ is not cyclic. It follows that $ X $ has another subgroup $ X_{1} $ of order $ d $ such that $ X = X_{1}N $. By hypothesis, $ X_{1} $ satisfies  $ \LL $-$ \Pi $-property in $ G $. Applying Lemma \ref{ove}(2), $ X/N=X_{1}N/N $ satisfies  $ \LL $-$ \Pi $-property in $ G/N $. Hence  every subgroup of $ P/N $ of order $ p $ satisfies  $ \LL $-$ \Pi $-property in $ G/N $.
	
	If $ p>2 $ or $ p=2 $ and $ P/N $ is quaternion-free, then $ G/N $ is $ p $-supersoluble according to Theorem \ref{super}. Therefore we may suppose that $ p=2 $ and $ P/N $ is not quaternion-free.
	
	Let $ T \unlhd G $ be minimal  such that $ N < T < G $ and $ T/N \nleq  Z_{\UU_{2}}(G/N) $. Obviously, $ T/N $ possesses a unique maximal $ G/N $-invariant subgroup, say $ U/N $. The uniqueness of $ N $ yields that $ U $ is the unique maximal $ G $-invariant subgroup of $ T $. Notice that $ U/N=T/N\cap Z_{\UU_{2}}(G/N) $. By \cite[Chapter 2, Lemma 5.25]{Guo2015}, all  $ 2 $-supersoluble groups  are  $ 2 $-nilpotent. Therefore,  $ U/N $ is $ 2 $-nilpotent.  By Lemma \ref{hyper}, $ (P\cap T)/N $ possesses a cyclic subgroup $ \langle a \rangle N/N $ of order $ 2 $ or $ 4 $  such that $ a \in T -U $.  Let $ a_{0} \in T-U  $ be a $ p $-element such that $ o(a_{0})  $  is as small as possible. Since $ o(a_{0}) \leq o(a) $ and $ N $ is elementary abelian, we have that  $ o(a_{0})\leq 8 $. The minimality of $ o(a_{0}) $ implies that $ a_{0}^{2}\in U $.
	
	We shall complete the proof in the following two cases.
	
	\vskip0.1in
	\textbf{Case 1.} $ o(a_{0})> d $.

	In this case, $ o(a_{0}) = 8 $ and $ |N| = d=4 $.  Suppose that $ N \nleq  Z(T) $.  The uniqueness of $ U $ yields $ C_{T}(N) \leq U $.  Therefore $ T/C_{T}(N)\lesssim {\rm Aut}(N)\cong S_{3} $. It follows that $ T/U\leq Z_{\UU_{2}}(G/U) $, and thus $ T/N\leq Z_{\UU_{2}}(G/N) $, a contradiction. Hence $ N \leq  Z(T) $.  Observe that $ U/N $ is $ 2 $-nilpotent and  $ N\leq Z(T)\cap U\leq Z(U) $.  The uniqueness of $ N $ implies that $ O_{2'}(U) = 1 $ and $ U $ is a  $ 2 $-group. Since $ a_{0}^{2}\in U $ and $ o(a_{0}^{2})=4 $, we have $ \Phi(U)>1 $. Then $ N\leq \Phi(U) $ by the uniqueness of $ N $. Since $ U/N\leq Z_{\UU_{2}}(G/N) $, it follows from \cite[Lemma 2.10]{Su-2014} that $ U\leq Z_{\UU_{2}}(G) $. This contradicts the fact that $ |N|=4 $.
	
	\vskip0.1in
	\textbf{Case 2.} $ o(a_{0})\leq d $.

	Let $ H $ be a subgroup of $ P $ of  order $ d $ such that $ \langle a_{0} \rangle\leq H\leq \langle a_{0} \rangle N $.  Since
	$ N $ is the unique minimal normal subgroup of $ G $, we have $ H^{G} \geq N $. Since $ a_{0}\not \in U $ and
	$ U $ is the unique maximal $ G $-invariant subgroup of $ T $, we have that $ H^{G} = T $. By hypothesis, $ H $ satisfies $ \LL $-$ \Pi $-property in $ G $, and  hence $ |G:N_{G}(HU)| $ is a $ 2 $-number. Thus $ G=N_{G}(HU)P $. It follows that $ T=H^{G}=(HU)^{G}=H^{P}U $, and so $ T/U $ is a $ 2 $-group.
	
	Note that  $ U/N $ has a normal $ 2 $-complement, say $ R/N $. Write $ \overline{G}=G/R $. We will show that the exponent of $ \overline{T} $ is $ 2 $ or $ 4 $ (when $ \overline{T} $  is not quaternion-free).   Let $ \overline{D} $ be a Thompson critical subgroup of $ \overline{T} $.  If $ \Omega(\overline{T})< \overline{T} $, then $ \Omega(\overline{T})\leq \overline{U} $ by the  uniqueness of $ U $. Thus  $ \Omega(\overline{T})\leq Z_{\UU}(\overline{G}) $.
	By Lemma \ref{hypercenter}, we have that $ \overline{T}\leq Z_{\UU}(\overline{G}) $, a  contradiction. Hence  $ \overline{T} = \overline{D} = \Omega(\overline{D}) $.  If $ \overline{T} $ is a nonabelian quaternion-free $ 2 $-group, then $ \overline{T} $ has a characteristic subgroup $ \overline{T_{0}} $ of index $ 2 $ by Lemma \ref{charcteristic}. The uniqueness of $ U $ implies $ T_{0}=U $. Thus  $ \overline{T} \leq Z_{\UU}(\overline{G}) $, which is impossible. This means that $ \overline{T} $ is a nonabelian $ 2 $-group if and only if $ \overline{T} $ is not quaternion-free.  In view of  Lemma \ref{critical}, the exponent of $ \overline{T} $ is $ 2 $ or $ 4 $ (when $ \overline{T} $  is not quaternion-free), as desired.

	Since $ N $ is elementary abelian, we see that every element of $ P\cap T $ has order at most $ 8 $. Clearly, $ |T/U|\geq 4 $. There exists  a normal subgroup $ Q $ of $ P $  such that $ P\cap U< Q< P\cap T $ and $ |Q:(P\cap U)|=2 $.  Pick a nonidentity  element $ b\in Q-(P\cap U) $. Then $ o(b)\leq 8 $ and $ b^{2} \in (P\cap U) $.

	If $ o(b) >d $, then  $ o(b) = 8 $ and $ |N|=d=4 $. With a similar argument as Case 1, we have  $ U\leq Z_{\UU_{2}}(G) $. This contradicts the fact that $ |N|=4 $.

	Assume that $ o(b) \leq d $. Let $ S $ be a  subgroup of $ Q $ of  order  $ d $ such that $ \langle b \rangle \leq S $. If $ b\in U $, then  $ P\cap U<\langle b \rangle(P\cap U)\leq U $, a contradiction. Therefore $ b
	\not \in U $. The  uniqueness of $ U $ implies that $ S^{G}=T $, and so $ Q=SU $. By hypothesis, $ S $ satisfies  $ \LL  $-$ \Pi $-property in $ G $. Then  $ |G:N_{G}(Q)|=|G:N_{G}(SU)| $ is a $ 2 $-number. Thus $ Q\unlhd G $, which contradicts the fact that $ U $ is the  unique maximal $ G $-invariant subgroup of $ T $. Our proof is now complete.
\end{proof}

\begin{theorem}\label{geq}
	Let $ P $ be a Sylow $ p $-subgroup of $ G $, and let $ d $ be a power of $ p $ such that $ p^{2} \leq d \leq  |P\cap O_{p'p}(G)|/p $. If every subgroup of  $ P $ of order $ d $ satisfies  $ \LL $-$ \Pi $-property in $ G $,  then $ G $ is $ p $-supersoluble.
\end{theorem}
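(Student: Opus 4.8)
The plan is to argue by a minimal counterexample: let $(G,P,d)$ be a counterexample with $|G|$ minimal. First I would pass to $\overline{G}=G/O_{p'}(G)$. Since $p$-supersolubility is insensitive to the $p'$-core (the $p$-chief factors of $G$ and of $\overline{G}$ coincide) and since the hypotheses are inherited by $\overline{G}$ --- using $O_{p'}(G)\cap P=1$ to get a bijection between the subgroups of order $d$ of $P$ and those of $\overline{P}=PO_{p'}(G)/O_{p'}(G)$, Lemma \ref{ove}(2) to transfer $\LL$-$\Pi$-property, and the fact that $|\overline{P}\cap O_{p'p}(\overline{G})|=|O_{p'p}(G)|_{p}=|P\cap O_{p'p}(G)|$ --- minimality forces $O_{p'}(G)=1$. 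Then $O_{p'p}(G)=O_{p}(G)$, so $P\cap O_{p'p}(G)=O_{p}(G)$ and the numerical hypothesis reads $p^{2}\le d\le |O_{p}(G)|/p$; in particular $O_{p}(G)\neq 1$ and $|O_{p}(G)|\ge pd$.

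Next I would fix a minimal normal subgroup $N\le O_{p}(G)$, an elementary abelian $p$-group. By Lemma \ref{invariant} one gets $|N|\le d$: otherwise $d\mid |N|$, and that lemma would force $|N|=d$, a contradiction. I then show $G/N$ is $p$-supersoluble by splitting on $|N|$. If $|N|=d$ this is Lemma \ref{G/N}; if $|N|=d/p$ it is Lemma \ref{little}; and if $|N|\le d/p^{2}$, then $(G/N,\,P/N,\,d/|N|)$ satisfies the hypotheses of the present theorem, because every subgroup of $P/N$ of order $d/|N|$ is the image of a subgroup of $P$ of order $d$ containing $N$ (whence $\LL$-$\Pi$-property transfers by Lemma \ref{ove}(2)), while $O_{p}(G)/N\le O_{p'p}(G/N)$ gives $|P/N\cap O_{p'p}(G/N)|\ge |O_{p}(G)|/|N|\ge p\cdot(d/|N|)$, so the minimal choice of $G$ makes $G/N$ $p$-supersoluble. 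In every case $G/N$ is $p$-supersoluble.

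Since $N$ is a $p$-group and $G/N$ is $p$-soluble, $G$ is $p$-soluble; hence every minimal normal subgroup of $G$ is an elementary abelian $p$-group contained in $O_{p}(G)$, and the previous paragraph applies to each of them. As $\UU_{p}$ is a saturated formation, the implication $G/L_{1},G/L_{2}\in\UU_{p}\Rightarrow G/(L_{1}\cap L_{2})\in\UU_{p}$ forces $G$ to have a \emph{unique} minimal normal subgroup, namely $N$. Saturation also yields $N\not\le\Phi(G)$, for otherwise $G/\Phi(G)$, being a quotient of $G/N\in\UU_{p}$, would lie in $\UU_{p}$ and so would $G$; by uniqueness of $N$ this forces $\Phi(G)=1$. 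Now $O_{p}(G)\unlhd G$ gives $\Phi(O_{p}(G))\le\Phi(G)=1$, so $O_{p}(G)$ is elementary abelian, and since $O_{p}(G)\cap\Phi(G)=1$, Gasch\"utz's theorem presents $O_{p}(G)$ as a direct product of minimal normal subgroups of $G$. Uniqueness then gives $O_{p}(G)=N$, whence $pd\le |O_{p}(G)|=|N|\le d$, a contradiction that completes the proof.

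The main obstacle is precisely the last step: one must convert the arithmetic hypothesis $d\le |P\cap O_{p'p}(G)|/p$ into a structural collapse. The delicate point is to squeeze $O_{p}(G)$ down to the single minimal normal subgroup $N$; this is exactly where $p$-solubility (to force all minimal normal subgroups into $O_{p}(G)$ and to make $N$ unique) together with $\Phi(G)=1$ (to make $O_{p}(G)$ elementary abelian and completely reducible) are indispensable. Once $O_{p}(G)=N$ is secured, the inequality $|O_{p}(G)|\ge pd$ coming from the hypothesis collides head-on with $|N|\le d$ coming from Lemma \ref{invariant}.
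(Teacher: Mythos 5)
Your proposal is correct and is essentially the paper's own proof: the same reduction to $O_{p'}(G)=1$, the same bound $|N|\le d$ via Lemma \ref{invariant}, and the identical three-way split (Lemma \ref{G/N} when $|N|=d$, Lemma \ref{little} when $|N|=d/p$, induction when $|N|\le d/p^{2}$) to show that $G/N$ is $p$-supersoluble. Your endgame is merely organized differently --- you derive a unique minimal normal subgroup from the formation property, force $\Phi(G)=1$ by saturation, and collapse $O_{p}(G)=N$ by Gasch\"utz to contradict $|O_{p}(G)|\ge pd$ --- whereas the paper splits on whether $\Phi(G)\cap P$ is trivial and quotes \cite[Lemma 2.10]{Su-2014} and \cite[Lemma 2.13]{Skiba-2007}, but these are exactly the same saturation, formation and Gasch\"utz-decomposition ingredients arranged in a different order.
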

\begin{proof}[\bf{Proof}]
	We proceed by induction on $ |G| $. By Lemma \ref{ove}(2), we can assume that $ O_{p'}(G) = 1 $. Then  $ O_{p'p}(G)=O_{p}(G) $ and $ |O_{p}(G)|\geq dp $. Let $ N $ be a minimal normal subgroup of $ G $ contained in $ O_{p}(G) $.  By Lemma \ref{invariant}, $ |N|\leq d $.
	
	Next we claim that $ G/N $ is $ p $-supersoluble. If $ |N| = d/p  $ or $ |N|=d $, then by  Lemmas \ref{little} and \ref{G/N},  $ G/N $ is $ p $-supersoluble.
	Assume that $ |N| \leq d/p^{2} $, that is , $ d/|N|\geq p^{2} $.  Clearly, $ |O_{p'p}(G/N)|\geq dp/|N| $. Applying  Lemma \ref{ove}(2), the hypotheses are inherited by $ G/N $.  Then $ G/N $ is $ p $-supersoluble by induction, and the claim holds.

	Assume that $ \Phi(G) \cap P > 1 $. Let $ N_{0} $ be a minimal normal  subgroup of $ G $ contained in  $ \Phi(G) \cap P $. The
	preceding claim shows the $ p $-supersolubility of $ G/N_{0} $. Now $ G $ is $ p $-supersoluble by \cite[Lemma 2.10]{Su-2014}.
	
	Assume that $ \Phi(G)\cap P=1 $. Then $ P $ is a direct product of some minimal normal subgroups of $ G $ according to \cite[Lemma 2.13]{Skiba-2007}. Since $ P $ is not a minimal normal subgroup of $ G $, it possesses different minimal $ G $-invariant subgroups, say $ N_{1} $, $ N_{2}  $. Since $ G/N_{1} $ and $ G/N_{2} $ are $ p $-supersoluble by the claim, we conclude that $ G $ is $ p $-supersoluble. The proof is complete.
\end{proof}

\begin{theorem}\label{sqrt}
	Let $ P $ be a Sylow $ p $-subgroup of $ G $, and let $ d $ be a power of $ p $ such that $ p^{2}\leq d \leq \sqrt{|P|} $. If every    subgroup of  $ P $ of order $ d $ satisfies  $ \LL $-$ \Pi $-property in $ G $, then $ G $ is $ p $-supersoluble.
\end{theorem}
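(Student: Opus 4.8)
The plan is to induct on $|G|$ and to funnel everything into the already proved Theorem~\ref{geq}. First I would replace $G$ by $G/O_{p'}(G)$: its Sylow $p$-subgroup is a copy of $P$, Lemma~\ref{ove}(2) transports the $\LL$-$\Pi$-property of the subgroups of order $d$ to the quotient, and $p$-supersolubility lifts back because $O_{p'}(G)$ carries only $p'$-chief factors. Hence I may assume $O_{p'}(G)=1$, so that $O_{p'p}(G)=O_p(G)$, and the argument splits according to whether $O_p(G)$ is trivial.

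The routine half is a quotient reduction. For a minimal normal subgroup $N\le O_p(G)$ one has $|N|\le d$ by Lemma~\ref{invariant}, and setting $d':=d/|N|$, every subgroup of $P/N$ of order $d'$ is the image of a subgroup of $P$ of order $d$ containing $N$; by Lemma~\ref{ove}(2) all of these satisfy $\LL$-$\Pi$-property in $G/N$, and $(d')^{2}=d^{2}/|N|^{2}\le|P|/|N|=|P/N|$ keeps the square-root bound. If $d'\ge p^{2}$ the induction hypothesis makes $G/N$ $p$-supersoluble; the two boundary values $d'=p$ (that is $|N|=d/p$) and $d'=1$ (that is $|N|=d$) are covered verbatim by Lemma~\ref{little} and Lemma~\ref{G/N}. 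Thus whenever $O_p(G)\ne1$ the quotient $G/N$ is $p$-supersoluble, hence $p$-soluble, and therefore $G$ itself is $p$-soluble.

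The crux, and the step I expect to fight with, is to exclude $O_p(G)=1$. If it held, then $F(G)=1$ and $F^{*}(G)=E(G)$ would be a nontrivial direct product of nonabelian simple groups, each of order divisible by $p$ (a $p'$-direct factor would sit inside $O_{p'}(G)$) and with $C_G(F^{*}(G))=1$. This is exactly the situation of Lemma~\ref{Qian}, which gives $|P\cap F^{*}(G)|^{2}>|P|$, so $|P\cap F^{*}(G)|>\sqrt{|P|}\ge d$ and I may pick $H\le P\cap F^{*}(G)$ of order $d$. For every maximal $G$-invariant subgroup $K$ of $H^{G}$ the $\LL$-$\Pi$-property forces $|G/K:N_{G/K}(HK/K)|$ to be a $p$-number, so \cite[Lemma 3.4.9]{Guo-2000} (used as in Lemma~\ref{p-group}) puts $HK/K\le O_p(G/K)$; since $H^{G}/K$ is a nonabelian minimal normal subgroup of $G/K$ we get $O_p(G/K)\cap(H^{G}/K)=1$, whence $H\le K$. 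Since $H^{G}$ is a product of minimal normal subgroups of $G$, its maximal $G$-invariant subgroups intersect in $1$, and we obtain $H=1$, a contradiction. The subtlety is that one must run this projection-to-each-chief-factor argument simultaneously over all the (possibly several) simple factors, and it is Lemma~\ref{Qian} that supplies the $p$-elements needed to start it.

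It then only remains to assemble the pieces. With $O_p(G)\ne1$ the second paragraph makes $G$ $p$-soluble, so Lemma~\ref{less} applies and gives $|P:O_p(G)|=|G/O_{p'p}(G)|_p<|O_p(G)|$; hence $|O_p(G)|^{2}>|P|\ge d^{2}$ and $|O_p(G)|\ge dp$. Consequently $d\le|P\cap O_{p'p}(G)|/p$, and Theorem~\ref{geq} declares $G$ $p$-supersoluble, completing the induction. Apart from this final invocation of Theorem~\ref{geq} and Lemma~\ref{less}, the only genuinely hard ingredient is the elimination of the nonabelian generalized Fitting subgroup in the preceding paragraph.
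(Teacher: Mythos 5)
Your proposal is correct and takes essentially the same route as the paper: reduce to $O_{p'}(G)=1$; kill the case $O_p(G)=1$ via Lemma~\ref{Qian} applied to the product of nonabelian simple groups $F^{*}(G)=\Soc(G)$ (the paper chooses the order-$d$ subgroup \emph{normal} in $P$ inside $P\cap\Soc(G)$, so a single maximal $G$-invariant subgroup already yields the contradiction, whereas you invoke Guo's Lemma 3.4.9 for an arbitrary $H$ and intersect over all maximal $G$-invariant subgroups — in fact $H\le K$ for one such $K$ already gives $H^{G}\le K$, a contradiction); then treat a minimal normal $p$-subgroup $N$ exactly as the paper does, via Lemma~\ref{invariant}, Lemmas~\ref{little} and \ref{G/N}, and induction, finishing with Lemma~\ref{less} and Theorem~\ref{geq}. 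The only loose point is your parenthetical that a $p'$ simple direct factor of $F^{*}(G)$ ``would sit inside $O_{p'}(G)$'': such a factor need not be normal in $G$, but the product of its $G$-conjugates is a normal $p'$-subgroup, which repairs the remark.
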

\begin{proof}[\bf{Proof}]
	We proceed by induction on $ |G| $. It is no loss to assume that $ O_{p'}(G)=1 $.
	
	Assume  that all minimal normal subgroups of $ G $ are nonabelian. Set $ T=\mathrm{Soc}(G) $. Then $ T=V_{1}\times\cdots \times V_{s} $, where every $ V_{i} $ $ (1\leq  i\leq s) $ is a minimal normal subgroup of $ G $.  Clearly, $ C_{G}(T) = 1 $. Since $ O_{p'}(G)=1 $, it follows from Lemma \ref{Qian} that $ |T|_{p}>\sqrt{|P|}\geq d $. Let $ L $ be a subgroup of $ P\cap T $ of order $ d $ such that $ L\unlhd P $. Choose  a maximal $ G $-invariant subgroup $ K $ of $ L^{G} $. Then $ |G:N_{G}(LK)| $ is a $ p $-number. Thus $ LK\unlhd G $, and $ L^{G}=LK $.
	This implies that $ L^{G}/K $ is a $ p $-group, contrary to the fact that $ L^{G}/K $ is  isomorphic to some $ V_{i} $. Therefore $ G $ possesses a minimal normal $ p $-subgroup, say $ N $. By Lemma \ref{invariant}, $ |N|\leq d $.
	
	Next we claim that $ G/N $ is $ p $-supersoluble. If $ |N| = d/p  $ or $ |N|=d $, then by  Lemmas \ref{little} and \ref{G/N},  $ G/N $ is $ p $-supersoluble.
	Assume that $ |N| \leq d/p^{2} $. Then $ p^{2} \leq d/|N|\leq \sqrt{|P/N|} $. Applying  Lemma \ref{ove}(2), the hypotheses are inherited by $ G/N $. By induction,  $ G/N $ is $ p $-supersoluble, as desired.

	Consequently, $ G $ is $ p $-soluble. Then $ |O_{p'p}(G)|_{p}>\sqrt{|P|}\geq
	d  $ by Lemma \ref{less}. Applying Theorem \ref{geq}, we get  that $ G $ is $ p $-supersoluble.
\end{proof}

\begin{proof}[\bf{Proof of Theorem \ref{main}}]
	By Theorems \ref{super}, \ref{geq} and \ref{sqrt}, the conclusion follows.
\end{proof}



\section*{Acknowledgments}

This work is supported by the National Natural Science Foundation of China (Grant No.12071376).



\small


\end{document}